\def\glim{\mathop{\text{\normalfont $\Gamma-$lim}}}
\def\diam{\mathop{\text{\normalfont diam}}}
\def\pint{\operatorname {--\!\!\!\!\!\int\!\!\!\!\!--}}
\newcommand{\R}{\mathbb{R}}
\newcommand{\N}{\mathbb{N}}
\newcommand{\ve}{\varepsilon}
\newcommand{\ito}{\infty}
\newtheorem{thm}{Theorem}[section]
\newtheorem{lemp}[thm]{Lemma}
\newtheorem{prop}[thm]{Proposition}
\theoremstyle{definition}
\newtheorem{de}[thm]{Definition}
\theoremstyle{remark}
\newtheorem{rk}[thm]{Remark}
\numberwithin{equation}{section}
\begin{document}

\title{Some nonlocal optimal design problems}

\author[J. Fern\'andez Bonder and J.F. Spedaletti]{Juli\'an Fern\'andez Bonder and Juan F. Spedaletti}

\address[J. Fern\'andez Bonder]{Departamento de Matem\'atica FCEN - Universidad de Buenos Aires and IMAS - CONICET. Ciudad Universitaria, Pabell\'on I (C1428EGA)
Av. Cantilo 2160. Buenos Aires, Argentina.}

\email{jfbonder@dm.uba.ar}

\urladdr{http://mate.dm.uba.ar/~jfbonder}

\address[J. F. Spedaletti]{Departamento de Matem\'atica, Universidad Nacional de San Luis and IMASL - CONICET. Ej\'ercito de los Andes 950 (D5700HHW), San Luis, Argentina.}

\email{jfspedaletti@unsl.edu.ar}

\subjclass[2010]{35P30, 35J92, 49R05}

\keywords{Shape optimization, Fractional laplacian, Gamma convergence}

\begin{abstract}
In this paper we study two optimal design problems associated to fractional Sobolev spaces $W^{s,p}(\Omega)$. Then we find a relationship between these two problems and finally we investigate the convergence when $s\uparrow 1$.
\end{abstract}

\maketitle

\section{Introduction}
Let $\Omega \subset \R^n$ be an open, connected and bounded set. For $0<s<1$ and $1<p<\ito$ we consider the fractional Sobolev space $W^{s,p}(\Omega)$ defined as follows
\begin{equation}\label{Sobolevfraccionarioclasico}
W^{s,p}(\Omega)=\left \{u\in L^{p}(\Omega)\colon \frac{|u(x)-u(y)|}{|x-y|^{\frac{n}{p}+s}}\in L^{p}(\Omega\times \Omega)\right \},
\end{equation}
endowed with the natural norm
\begin{equation}\label{normaSobolevclasico}
\|u\|_{W^{s,p}(\Omega)}=\left(\int_\Omega |u|^p\,dx+\iint_{\Omega\times\Omega} \frac{|u(x)-u(y)|^p}{|x-y|^{n+sp}}\,dx \,dy \right)^{1/p}.
\end{equation}
The term
\begin{equation}\label{seminormaSobolev}
[u]_{W^{s,p}(\Omega)}^p = [u]_{s,p}^p= \iint_{\Omega\times\Omega} \frac{|u(x)-u(y)|^p}{|x-y|^{n+sp}}\,dx \,dy,
\end{equation}
is called the {\em Gagliardo seminorm} of $u$. We refer the interested reader to \cite{DiNezza-Palatucci-Valdinoci} for a throughout introduction to these spaces.

The purpose of this paper is to analyze some optimization problems related to the best Poincar\'e constant in these spaces. First, we consider the following problem: given a measurable set $A\subset \Omega$, we define the optimal Poincar\'e constant $\lambda_s(A)$ as the number
\begin{equation}\label{lambdaA}
\lambda_s(A):= \inf\left\{\frac{\frac12[v]_{s,p}^p}{\|v\|_p^p}\colon v\in W^{s,p}(\Omega),\ v=0 \text{ a.e. in } A\right\}.
\end{equation}
This constant is the largest possible one in Poincar\'e's inequality
$$
\lambda \int_\Omega |v|^p\, dx \le \frac12\iint_{\Omega\times\Omega} \frac{|v(x)-v(y)|^p}{|x-y|^{n+sp}}\,dx \,dy 
$$
for every function $v\in W^{s,p}(\Omega)$ that vanishes on the set $A$.

Also, this constant can be seen as the first eigenvalue of a fractional $p-$laplace type equation. See next section.

The first problem that we want to address is to minimize this constant $\lambda_s(A)$ with respect to the set $A$ in the class of measurable sets of fixed measure. That is, we take $\alpha\in (0,1)$ and define the class
$$
\mathcal{A}_\alpha := \{A\subset \Omega\colon A \text{ measurable and } |A|=\alpha |\Omega|\}.
$$
So our optimization problem reads, find an {\em optimal set} $A_s\in \mathcal{A}_\alpha$ such that
\begin{equation}\label{hard.opt}
\lambda_s(A_s) = \Lambda_s(\alpha) := \inf\{\lambda_s(A)\colon A\in \mathcal{A}_\alpha\}.
\end{equation}
This problem is called the {\em Hard Obstacle Problem} since the optimal set $A$ can be seen as the obstacle where the solution is forced to vanish. 

In the case $s=1$, that is when the classical Sobolev spaces are consider, some related problems were studied in \cite{Bonder-Rossi-Wolanski2, Bonder-Rossi-Wolanski1}. In those papers it was shown that there exists an optimal configuration, and some properties of optimal configurations and of their associated extremals were obtained. We refer to the interested reader to the above mentioned papers.

For this hard obstacle problem, our main result reads:
\begin{thm}\label{teo.hard.intro}
Let $\alpha\in (0,1)$, $0<s<1<p<\ito$ and $\Omega\subset\R^n$ be a bounded open set. Then, there exists a measurable set $A_s\subset\Omega$ such that $$
|A_s|=\alpha |\Omega|\quad \text{and}\quad \Lambda_s(\alpha)=\lambda_s(A_s)
$$ 
where $\lambda_s(A)$ and $\Lambda_s(\alpha)$ are given by \eqref{lambdaA} and \eqref{hard.opt} respectively. 

Moreover, if $u_s\in W^{s,p}(\Omega)$ is an extremal associated to $\lambda_s(A)$, then 
$$
A_s = \{u_s=0\}\cap\Omega \text{ a.e.}
$$
\end{thm}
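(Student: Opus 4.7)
The plan is to apply the direct method of the calculus of variations at two levels simultaneously: on the optimal obstacle set and on its associated extremal. I would first fix, for each admissible $A \in \mathcal{A}_\alpha$, an extremal $u \in W^{s,p}(\Omega)$ realising $\lambda_s(A)$; this is standard, since the functional in \eqref{lambdaA} is weakly lower semicontinuous on $W^{s,p}(\Omega)$, the Gagliardo seminorm is convex, and the embedding $W^{s,p}(\Omega)\hookrightarrow L^p(\Omega)$ is compact on the bounded set $\Omega$. Then I take a minimising sequence $\{A_k\}\subset \mathcal{A}_\alpha$ with $\lambda_s(A_k)\to \Lambda_s(\alpha)$ and corresponding extremals $u_k$ normalised by $\|u_k\|_p=1$, so that $\tfrac12[u_k]_{s,p}^p=\lambda_s(A_k)$.

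Next I would extract compactness. The bound on $[u_k]_{s,p}$ together with $\|u_k\|_p=1$ makes $\{u_k\}$ bounded in $W^{s,p}(\Omega)$, so up to a subsequence $u_k\rightharpoonup u_s$ in $W^{s,p}(\Omega)$ and $u_k\to u_s$ strongly in $L^p(\Omega)$; in particular $\|u_s\|_p=1$, hence $u_s\not\equiv 0$. Simultaneously $\{\chi_{A_k}\}$ is uniformly bounded in $L^\infty(\Omega)$, so up to a further subsequence $\chi_{A_k}\rightharpoonup^{*}\chi$ with $0\le\chi\le1$ and $\int_\Omega\chi\,dx=\alpha|\Omega|$. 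The crucial product passage to the limit uses that $\chi_{A_k}u_k\equiv 0$ together with the strong $L^p$ convergence of $u_k$ and the weak-$\ast$ convergence of $\chi_{A_k}$, yielding $\chi\,u_s=0$ a.e. Thus $\chi$ is supported on the zero set $Z:=\{u_s=0\}\cap\Omega$, forcing $|Z|\ge\alpha|\Omega|$.

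I then pick any measurable $A_s\subset Z$ with $|A_s|=\alpha|\Omega|$, so that $u_s$ is admissible in \eqref{lambdaA} for $\lambda_s(A_s)$. Weak lower semicontinuity of $[\,\cdot\,]_{s,p}^p$ (applied to the Gagliardo kernel, e.g.\ via Fatou plus Mazur's lemma) combined with $\|u_s\|_p=1$ gives
\begin{equation*}
\Lambda_s(\alpha)\le \lambda_s(A_s)\le \tfrac12[u_s]_{s,p}^p \le \liminf_{k\to\ito}\tfrac12[u_k]_{s,p}^p =\Lambda_s(\alpha),
\end{equation*}
so every inequality is an equality: $A_s$ is optimal and $u_s$ is an associated extremal.

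The "moreover" part is the main obstacle. Any extremal $u_s$ must solve, in weak form on $\Omega\setminus A_s$, the nonlocal eigenvalue equation coming from the Euler--Lagrange condition for \eqref{lambdaA} (fractional $p$-Laplace type operator, with Dirichlet datum on $A_s$ and a natural nonlocal condition on $\partial\Omega$). To conclude $\{u_s=0\}\cap\Omega=A_s$ a.e., I would invoke a strong minimum principle for this operator, ruling out vanishing of $|u_s|$ on a set of positive measure inside $\Omega\setminus A_s$. The delicate point is formulating the right eigenvalue problem in $W^{s,p}(\Omega)$ (rather than $W^{s,p}_0$) and citing or adapting a maximum principle valid for the fractional $p$-Laplace operator against an obstacle, since the standard statements are for globally nonnegative solutions with full exterior Dirichlet data.
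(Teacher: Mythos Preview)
Your existence argument is correct and is essentially the paper's direct method; the only variation is how you show $|\{u_s=0\}\cap\Omega|\ge\alpha|\Omega|$. You pass to a weak-$\ast$ limit $\chi$ of $\chi_{A_k}$ and use the product identity $\chi_{A_k}u_k=0$; the paper instead extracts a.e.\ convergence $u_k\to u_s$ and invokes upper semicontinuity of the measure of the zero level set under a.e.\ convergence. Both are short and equivalent for this purpose; your route has the small advantage that it is exactly what the paper later uses when relating the soft and hard problems, while the paper's route avoids extracting a limit of $\chi_{A_k}$ at all.

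For the ``moreover'' part your diagnosis is right (strong minimum principle for the regional $(s,p)$-Laplacian), but the concern you raise is handled in the paper by a device you did not mention and that removes the need to ``adapt'' the principle. The minimum principle the paper proves (via a logarithmic estimate) requires the obstacle to be \emph{closed}, so that $\Omega\setminus A$ is open and one can center balls there. Your $A_s$ is only measurable, so applying the principle directly on $\Omega\setminus A_s$ is not justified. The paper argues by contradiction: if $|\{u_s=0\}\cap\Omega|>\alpha|\Omega|$, choose a \emph{closed} set $A$ with $\alpha|\Omega|<|A|<|\{u_s=0\}\cap\Omega|$ and $A\subset\{u_s=0\}$. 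Then $\Lambda_s(\alpha)\le\lambda_s(A)\le\tfrac12[u_s]_{s,p}^p=\Lambda_s(\alpha)$, so $u_s$ is an extremal for $\lambda_s(A)$ and solves the eigenvalue problem on the open set $\Omega\setminus A$; the strong minimum principle now gives $u_s>0$ a.e.\ there, contradicting $|(\{u_s=0\}\cap\Omega)\setminus A|>0$. With this step your outline matches the paper's proof.
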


Related to this optimization problem, is the following variant that sometimes is referred to as the {\em Soft Obstacle Problem}. That is, given $\sigma>0$ and $A\subset \Omega$ measurable, we look for the best optimal constant in the following Poincar\'e-type inequality
$$
\lambda \int_\Omega |v|^p\, dx \le \frac12\iint_{\Omega\times\Omega} \frac{|v(x)-v(y)|^p}{|x-y|^{N+sp}}\, dxdy + \sigma \int_\Omega |v|^p\, \chi_A dx,
$$
where $\chi_A$ denotes the characteristic function of the set $A\subset \Omega$.

That is
$$
\lambda_s(\sigma, A) := \inf\left\{\frac{\frac12[v]_{s,p}^p + \sigma \|v\|_{p, A}^p}{\|v\|_p^p}\colon v\in W^{s,p}(\Omega)\right\},
$$
where $\|v\|_{p;A}^p = \int_A |v|^p\, dx$.

The soft obstacle problem then consists on minimizing the constants $\lambda_s(\sigma, A)$ among sets $A\in \mathcal{A}_\alpha$.

The hard and soft obstacle problems are related by the fact that the term $\sigma\|v\|_{p,A}$ can be seen as a penalization term and then (heuristically),
$$
\lambda_s(\sigma, \chi_A) \to \lambda_s(A) \quad \text{as}\ \sigma\to\ito.
$$
So the next point of the paper is to make this fact rigorous.

Since the set of characteristic functions is not closed under any reasonable topology, it is necesary to relax the problem and work within the class of functions $\phi\in L^\ito(\Omega)$ such that $0\le \phi\le 1$ which is the weak-* closure of the sets of characteristic functions.

So, given any such $\phi\in L^\ito(\Omega)$ and a constant $\sigma>0$, the problem that we address is the following: define the best constant in a Poincar\'e-type inequality as
\begin{equation}\label{lambdaphi}
\lambda_s(\sigma, \phi) := \inf\left\{\frac{\frac12[v]_{s,p}^p + \sigma \|v\|_{p,\phi}^p}{\|v\|_p^p}\colon v\in W^{s,p}(\Omega)\right\},
\end{equation}
where
$$
 \|v\|_{p,\phi}^p = \int_\Omega |v|^p\, \phi\, dx,
$$
denote the class of admissible {\em potentials} $\phi$ by
$$
\mathcal{B}_\alpha := Š\{\phi\in L^\ito(\Omega)\colon 0\le\phi\le 1,\ \|\phi\|_1=\alpha |\Omega|\},
$$
and consider the optimization problem
\begin{equation}\label{soft.opt}
\Lambda_s(\sigma,\alpha) := \inf\{\lambda_s(\sigma, \phi)\colon \phi\in \mathcal{B}_\alpha\}.
\end{equation}
Then look for an optimal potential $\phi_\sigma$ such that
$$
\lambda_s(\sigma, \phi_\sigma) = \Lambda_s(\sigma,\alpha).
$$
Recall that $\chi_A\in \mathcal{B}_\alpha$ if and only if $A\in \mathcal{A}_\alpha$.

For this problem we show the existence of this optimal potential and, moreover, we prove that $\phi_\sigma = \chi_{A_\sigma}$ for some $A_\sigma\in \mathcal{A}_\alpha$, so we recover a solution to our original soft obstacle problem. Finally, we show that
$$
\Lambda_s(\sigma, \alpha) \to \Lambda_s(\alpha) \text{ and } \chi_{A_\sigma} \to \chi_{A_s} \text{ in } L^1(\Omega) \quad \text{as } \sigma\to\ito,
$$
where $A_s\in \mathcal{A}_\alpha$ is an optimal configuration for $\Lambda_s(\alpha)$. See \cite{Bonder-Giubergia-Mazzone} for related results in a local problem.

Our results for the soft obstacle problem are summarized in the next theorem.
\begin{thm}\label{teo.soft.intro}
Let $0<s<1<p<\ito$ and $\Omega\subset \R^n$ be a bounded open set with Lipschitz boundary. Let also $\sigma>0$ and $0<\alpha<1$ be fixed. Then there exists $A_{s,\sigma}\in \mathcal{A}_\alpha$ such that
$$
\lambda_s(\sigma, \chi_{A_{s,\sigma}}) = \Lambda_s(\sigma,\alpha),
$$
where $\lambda_s(\sigma,\phi)$ and $\Lambda_s(\sigma,\alpha)$ are given by \eqref{lambdaphi} and \eqref{soft.opt} respectively.

Moreover, if $u_{\sigma}\in W^{s,p}(\Omega)$ is an extremal associated to $\lambda_s(\sigma, \chi_{A_{s,\sigma}})$, then there exists $t>0$ such that
$$
\{u_\sigma<t\}\subset A_{s,\sigma}\subset \{u_\sigma \le t\}.
$$

For the limit $\sigma\to\ito$, there holds that
$$
\lim_{\sigma\to\ito} \Lambda_s(\sigma, \alpha) = \Lambda_s(\alpha),
$$
and any family of optimal pairs $\{(u_\sigma, \chi_{A_{s,\sigma}})\}_{\sigma>0} \subset W^{s,p}(\Omega)\times \mathcal{B}_\alpha$ is precompact in the strong topology of $W^{s,p}(\Omega)$ in the first variable and the weak* topology of $L^\ito(\Omega)$ in the second variable.

Finally, any acumulation point of the family of optimal pairs has the form $(u,\chi_A)$ where $A\in \mathcal{A}_\alpha$, $A=\{u=0\}\cap\Omega$ and $u$ is an extremal for $\Lambda_s(\alpha)$.
\end{thm}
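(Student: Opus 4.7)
I apply the direct method. Take a minimizing sequence $\{\phi_n\}\subset \mathcal{B}_\alpha$ for $\Lambda_s(\sigma,\alpha)$ with normalized extremals $u_n\in W^{s,p}(\Omega)$, $\|u_n\|_p=1$. The Rayleigh quotient bounds the Gagliardo seminorms $[u_n]_{s,p}$, so $\{u_n\}$ is bounded in $W^{s,p}(\Omega)$. The Lipschitz boundary assumption yields the compact embedding $W^{s,p}(\Omega)\hookrightarrow L^p(\Omega)$, so along a subsequence $u_n\to u$ in $L^p(\Omega)$ and $u_n\rightharpoonup u$ in $W^{s,p}(\Omega)$. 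Since $\mathcal{B}_\alpha$ is weak-$*$ compact in $L^{\infty}(\Omega)$, extract $\phi_n\rightharpoonup^{*}\phi\in \mathcal{B}_\alpha$. The $L^1$ convergence $|u_n|^p\to |u|^p$ combined with the weak-$*$ convergence of $\phi_n$ gives $\int |u_n|^p\phi_n\,dx\to \int|u|^p\phi\,dx$; together with weak lower semicontinuity of $[\,\cdot\,]_{s,p}^p$ this shows that $(u,\phi)$ realizes $\Lambda_s(\sigma,\alpha)$.

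\textbf{Bathtub reduction to a characteristic function.} Replacing $u_\sigma$ by $|u_\sigma|$ (still an extremal), assume $u_\sigma\ge 0$. Since the Gagliardo seminorm and denominator are independent of $\phi$, the optimal potential $\phi_\sigma$ minimizes $\int u_\sigma^p\,\phi\,dx$ over $\mathcal{B}_\alpha$. The classical bathtub principle gives a threshold $t>0$ with $\chi_{\{u_\sigma<t\}}\le \phi_\sigma\le \chi_{\{u_\sigma\le t\}}$; modifying $\phi_\sigma$ on the flat set $\{u_\sigma=t\}$ to preserve $\|\phi_\sigma\|_1=\alpha|\Omega|$ produces $\phi_\sigma=\chi_{A_{s,\sigma}}$ for some $A_{s,\sigma}\in \mathcal{A}_\alpha$ satisfying the stated inclusions.

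\textbf{The limit $\sigma\to\infty$.} Monotonicity of $\sigma\mapsto\Lambda_s(\sigma,\alpha)$ is immediate, and the upper bound $\Lambda_s(\sigma,\alpha)\le \Lambda_s(\alpha)$ follows by testing with $\chi_A$ and any $v$ vanishing on $A$ (the penalty drops out), so $\Lambda_s(\sigma,\alpha)\nearrow L\le \Lambda_s(\alpha)$. For optimal pairs $(u_\sigma,\chi_{A_{s,\sigma}})$ with $\|u_\sigma\|_p=1$, $[u_\sigma]_{s,p}^p\le 2\Lambda_s(\alpha)$ and $\int_{A_{s,\sigma}}|u_\sigma|^p\,dx\le \Lambda_s(\alpha)/\sigma\to 0$. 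Extracting as before gives $u_\sigma\to u$ in $L^p$, $u_\sigma\rightharpoonup u$ in $W^{s,p}$, and $\chi_{A_{s,\sigma}}\rightharpoonup^{*}\phi\in \mathcal{B}_\alpha$; in the limit $\int|u|^p\phi\,dx=0$, so $\phi=0$ a.e.\ on $\{u\ne 0\}$, and $|\{\phi>0\}|\ge\alpha|\Omega|$ because $\phi\le 1$ and $\|\phi\|_1=\alpha|\Omega|$. Picking any $A\in\mathcal{A}_\alpha$ with $A\subset\{u=0\}\cap\Omega$ yields
\begin{equation*}
\Lambda_s(\alpha)\le \lambda_s(A)\le \tfrac12[u]_{s,p}^p\le \liminf_{\sigma\to\infty}\tfrac12[u_\sigma]_{s,p}^p\le L\le \Lambda_s(\alpha),
\end{equation*}
forcing every inequality to be an equality. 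This gives $\Lambda_s(\sigma,\alpha)\to \Lambda_s(\alpha)$ and $[u_\sigma]_{s,p}\to [u]_{s,p}$; combined with weak $W^{s,p}$ convergence and uniform convexity of $W^{s,p}(\Omega)$, the convergence $u_\sigma\to u$ is strong in $W^{s,p}(\Omega)$.

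\textbf{Structure of accumulation points.} The limit $u$ is an extremal for $\lambda_s(A)$ with $A\in\mathcal{A}_\alpha$, so Theorem \ref{teo.hard.intro} gives $A=\{u=0\}\cap\Omega$ a.e., whence $|\{u=0\}\cap\Omega|=\alpha|\Omega|$. Since $\phi\le 1$ is supported in $\{u=0\}\cap\Omega$ and $\|\phi\|_1=\alpha|\Omega|$, necessarily $\phi=\chi_{\{u=0\}\cap\Omega}$ a.e. Finally, $\chi_{A_{s,\sigma}}\to \chi_A$ in $L^1(\Omega)$: weak-$*$ convergence combined with $\|\chi_{A_{s,\sigma}}\|_2^2=\alpha|\Omega|=\|\chi_A\|_2^2$ (via $\chi^2=\chi$) gives $L^2$ convergence, hence $L^1$ convergence on the bounded set $\Omega$. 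I expect the main obstacle to be precisely this rigidity step: a priori the weak-$*$ limit of characteristic functions need not be a characteristic function, and only the sharp identification $A=\{u=0\}\cap\Omega$ supplied by Theorem \ref{teo.hard.intro}, which saturates $|\{u=0\}\cap\Omega|=\alpha|\Omega|=\|\phi\|_1$, forces $\phi$ to be a pure indicator rather than a true relaxation.
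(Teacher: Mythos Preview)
Your proof is correct and follows essentially the same route as the paper: direct method plus weak-$*$ compactness of $\mathcal{B}_\alpha$ for existence, the bathtub principle for the characteristic-function structure, monotonicity and the testing bound $\Lambda_s(\sigma,\alpha)\le\Lambda_s(\alpha)$ for the limit, and then the chain of inequalities forced to equalities together with uniform convexity of $W^{s,p}(\Omega)$ for strong convergence; the identification $\phi=\chi_{\{u=0\}\cap\Omega}$ via the saturation $|\{u=0\}\cap\Omega|=\alpha|\Omega|$ is exactly how the paper argues (there via Theorem~\ref{medida=}, the content of Theorem~\ref{teo.hard.intro}). Your final paragraph even upgrades the weak-$*$ convergence of $\chi_{A_{s,\sigma}}$ to $L^1$ convergence, which the paper mentions informally in the introduction but does not include in the formal statement or proof of Theorem~\ref{kappaainfinito}; your $L^2$-norm argument for this is a clean addition.
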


The soft obstacle problem in the relaxed form of \eqref{lambdaphi} also appears naturally in the study of the (time independent) fractional Schroedinger equation (for $p=2$) where it is of special interest to analyze the behavior of the eigenvalues (so-called fundamental states of the system). See, for instance \cite{Laskin}.

In this context, is relevant also to consider potentials $\phi$ that are allowed to change sign. Moreover, the bound $\phi\le 1$ is meaningless there.

An optimization problem related to the fractional Schroedinger eigenvalues with $L^q-$bounds ($q>1$) on the potential functions has been recently carried out in \cite{DP-FB-R}.

With the same methods presented in this paper, the minimization problem for the first eigenvalue of the fractional Schroedinger equation can be fully analyzed in the case of nonnegative, uniformly bounded potentials with prescribed $L^1-$norm. We leave the details to the interested reader.

To finish the paper, we analyze the connection between the hard obstacle problem \eqref{hard.opt} and its classical counterpart, when $s=1$. Therefore, we analyze the asymptotic behavior as $s\uparrow 1$ for \eqref{hard.opt} and based on some Gamma-convergence results due to A. Ponce in \cite{Ponce} we are able to prove the convergence of the nonlocal model to the local one.

To be more precise, let us define the constant
$$
\Lambda(\alpha) = \inf\left\{\tfrac12 \|\nabla v\|_p^p\colon v\in W^{1,p}(\Omega)\cap E_\alpha\right\}, 
$$
where 
$$
E_\alpha = \{v\in L^p(\Omega)\colon |\{v=0\}\cap\Omega|\ge \alpha |\Omega|,\ \|v\|_p=1\}.
$$
So we obtain the following behavior as $s\uparrow 1$.
\begin{thm}\label{teo.s1.intro}
Let $1<p<\ito$, $\Omega\subset \R^n$ be open and bounded with Lipschitz boundary and let $K(n,p)$ be the constant defined as
$$
K(n,p)=\pint_{S^{n-1}}|z_n|^p\, dS_z.
$$
Then
$$
\lim_{s\uparrow 1} (1-s)\Lambda_s(\alpha) = K(n,p)\Lambda(\alpha).
$$
Moreover, if $u_s\in W^{s,p}(\Omega)\cap E_\alpha$ is an extremal for $\Lambda_s(\alpha)$, then $\{u_s\}_{0<s<1}$ is precompact in $L^p(\Omega)$ and every accumulation point is an extremal for $\Lambda(\alpha)$. 

Finally, if $A_s\subset \Omega$ is an optimal set for $\Lambda_s(\alpha)$ then, up to some subsequence, there exists $A\subset \Omega$ such that
$$
\chi_{A_s}\to \chi_A \quad \text{strongly in } L^1(\Omega),
$$ 
and $A$ is optimal for $\Lambda(\alpha)$.
\end{thm}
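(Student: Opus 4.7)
My plan is to derive the theorem from a direct $\Gamma$-convergence argument based on Ponce's results in \cite{Ponce}, which as $s\uparrow 1$ supply: \textbf{(a)} a \emph{compactness} principle, any family $\{v_s\}\subset W^{s,p}(\Omega)$ with $\sup_s(\|v_s\|_p^p + (1-s)[v_s]_{s,p}^p)<\infty$ is relatively compact in $L^p(\Omega)$ and every accumulation point lies in $W^{1,p}(\Omega)$; \textbf{(b)} a \emph{liminf inequality}, $K(n,p)\|\nabla v\|_p^p \leq \liminf_{s\uparrow 1}(1-s)[v_s]_{s,p}^p$ whenever $v_s\to v$ in $L^p(\Omega)$; and \textbf{(c)} a \emph{pointwise recovery}, $(1-s)[v]_{s,p}^p \to K(n,p)\|\nabla v\|_p^p$ for each $v\in W^{1,p}(\Omega)$. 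Using Theorem~\ref{teo.hard.intro} I first rewrite $\Lambda_s(\alpha) = \inf\{\tfrac12 [v]_{s,p}^p : v\in W^{s,p}(\Omega)\cap E_\alpha\}$, the natural form to compare with $\Lambda(\alpha)$.

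For the upper bound, let $v$ be an extremal for $\Lambda(\alpha)$. Since $\Omega$ has Lipschitz boundary, $v\in W^{s,p}(\Omega)$ for every $s\in(0,1)$ and $v\in E_\alpha$, so $v$ is admissible for $\Lambda_s(\alpha)$, and (c) gives $(1-s)\Lambda_s(\alpha) \leq \tfrac{1-s}{2}[v]_{s,p}^p \to K(n,p)\Lambda(\alpha)$. For the liminf, let $u_s$ be an extremal for $\Lambda_s(\alpha)$ with $\|u_s\|_p=1$, $u_s=0$ on $A_s$ and $|A_s|=\alpha|\Omega|$. The upper bound keeps $(1-s)[u_s]_{s,p}^p$ bounded, so (a) yields a subsequence with $u_s\to u$ in $L^p(\Omega)$, $u\in W^{1,p}(\Omega)$ and $\|u\|_p=1$. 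A short Egorov argument (wherever $|u|\geq\eta$, eventually $|u_s|\geq\eta/2$, so that region lies outside $A_s$) shows that any weak-$*$ limit $\phi$ of $\chi_{A_s}$ in $L^\infty(\Omega)$ satisfies $\phi\leq \chi_{\{u=0\}}$ and $\int\phi=\alpha|\Omega|$; in particular $|\{u=0\}|\geq \alpha|\Omega|$, so $u\in E_\alpha$. Then (b) gives $K(n,p)\Lambda(\alpha) \leq \tfrac12 K(n,p)\|\nabla u\|_p^p \leq \liminf (1-s)\Lambda_s(\alpha)$, which together with the upper bound yields both the limit identity and the fact that $u$ is an extremal of $\Lambda(\alpha)$.

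It remains to identify $\phi$ as a characteristic function. The cleanest route uses the \emph{local} counterpart of Theorem~\ref{teo.hard.intro} available in \cite{Bonder-Rossi-Wolanski2}: any extremal $u$ of $\Lambda(\alpha)$ satisfies $|\{u=0\}|=\alpha|\Omega|$. Combined with $\phi\leq \chi_{\{u=0\}}$ and $\int\phi=\alpha|\Omega|$, this forces $\phi = \chi_{\{u=0\}}=:\chi_A$, and $A$ is then an optimal set for $\Lambda(\alpha)$. Finally, weak-$*$ convergence of characteristic functions of fixed mass to a characteristic function automatically upgrades to strong convergence in $L^1(\Omega)$, since $\|\chi_{A_s}-\chi_A\|_2^2 = |A_s|+|A|-2\int \chi_{A_s}\chi_A \to 0$. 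I expect the main obstacle to be precisely this identification step: without the local analogue of Theorem~\ref{teo.hard.intro} the weak-$*$ limit $\phi$ could in principle be a nontrivial density supported in $\{u=0\}$, and ruling that out requires either invoking \cite{Bonder-Rossi-Wolanski2} or adapting the strong maximum principle / Harnack-type argument that proves the fractional statement to the local setting.
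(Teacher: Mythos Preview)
Your proposal is correct and follows the same overall strategy as the paper: both rely on Ponce's results \cite{Ponce} (liminf inequality, pointwise convergence via \cite{Bourgain-Brezis-Mironescu}, and compactness) together with the local identity $|\{u=0\}|=\alpha|\Omega|$ for extremals of $\Lambda(\alpha)$, which the paper states as a separate lemma citing \cite{Bonder-Rossi-Wolanski1}.

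There are two genuine technical differences worth noting. First, the paper packages the first two parts of the theorem into an abstract $\Gamma$-convergence statement: it proves a general lemma that if $F_j\xrightarrow{\Gamma}F$ on $X$, $Y\subset X$ is closed, and $F_j\to F$ pointwise on $Y$, then the restrictions to $Y$ also $\Gamma$-converge; it then applies this with $Y=E_\alpha$, checking closedness via the upper semicontinuity $|\{u=0\}|\ge\limsup|\{u_j=0\}|$ under a.e.\ convergence, and invokes the standard convergence-of-minima theorem. You instead do the limsup/liminf argument by hand. Your route is shorter; theirs isolates a reusable lemma. Second, and more interestingly, for the set convergence the paper does \emph{not} pass through a weak-$*$ limit of $\chi_{A_s}$. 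Instead it invokes \cite[Lemma~3.1]{Bonder-Groisman-Rossi}: if $f_n\to f$ a.e.\ and $|\{f_n=0\}|\to|\{f=0\}|$, then $|\{f_n=0\}\triangle\{f=0\}|\to0$. Since $|A_{s_j}|=\alpha|\Omega|=|\{u=0\}|$, this gives $\chi_{A_{s_j}}\to\chi_{\{u=0\}}$ in $L^1$ directly. Your Egorov/weak-$*$ identification argument together with the $\|\chi_{A_s}-\chi_A\|_2^2$ computation achieves the same conclusion and is self-contained, whereas the paper's route is slicker once one quotes the external lemma. The ``obstacle'' you anticipated is exactly the one the paper handles by citing the local analogue from \cite{Bonder-Rossi-Wolanski1}, so your diagnosis is accurate.
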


\subsection*{Organization of the paper} After this introduction, the rest of the paper is organized as follows.

We begin in Section 2 with a rather large section where all the preliminaries on fractional Sobolev spaces and on the fractional $p-$laplacian are collected. This section contains almost no new material and an expert on the field can safely skip it an move directly to the next sections. We choose to include it because some of the results (specially subsection 2.1) are scattered in the literature and we weren't able to find a precise reference for those.

Section 3 contains the main results of the paper. Namely the study of the Hard and Soft Obstacle Problems \eqref{hard.opt} and \eqref{soft.opt} respectively and the connection between them.

Finally, in Section 4, we analyze the asymptotic behavior of the Hard Obstacle Problem when $s\uparrow 1$.

\section{Preliminaries}
In this section, we review some definitions on fractional Sobolev spaces and on the $p-$fractional Laplace operator. We believe that most of this results are known to experts and constitute part of the ``folklore'' on the subject but since we were not able to find a precise reference for these, we have chosen to include proofs of most of the facts that are needed.

\subsection{The regional $(s,p)-$laplacian}
We begin with the definition of the fractional $p-$laplacian that we use in this paper. This operator is some times denoted as the {\em regional} fractional $p-$laplacian.

For any smooth and bounded function $u$ ($C^2(\Omega)\cap L^\infty(\Omega)$ is enough), we define the regional $(s,p)-$laplacian as
\begin{equation}\label{eq.plap}
\begin{split}
(-\Delta_{p,\Omega})^s u (x) &:= \text{p.v.} \int_\Omega \frac{|u(x)-u(y)|^{p-2} (u(x)-u(y))}{|x-y|^{n+sp}}\, dy\\
&= \lim_{\ve\downarrow 0} \int_{\Omega\setminus B_\ve(x)}\frac{|u(x)-u(y)|^{p-2} (u(x)-u(y))}{|x-y|^{n+sp}}\, dy,
\end{split}
\end{equation}
for any $x\in \Omega$.

Let us first see that, in the case $2\le p<\infty$, this operator is well defined for smooth enough functions.
\begin{lemp}\label{plap.bien}
Let $0<s<1$ and $2\le p<\infty$ be fixed and let $\Omega\subset\R^n$ be open. Then the operator $(-\Delta_{p,\Omega})^su(x)$ is well defined for $u\in C^2(\Omega)\cap L^\infty(\Omega)$.
\end{lemp}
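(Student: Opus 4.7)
The strategy is to split the truncated integral into a tail piece and an inner piece, and to exploit the oddness of $t\mapsto|t|^{p-2}t$ through a symmetrization argument to establish absolute convergence in the limit $\varepsilon\downarrow 0$.

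Fix $x\in\Omega$, pick $R>0$ small enough that $\overline{B_R(x)}\subset\Omega$, and write $F(t)=|t|^{p-2}t$. For $0<\varepsilon<R$ I would split
$$
\int_{\Omega\setminus B_\varepsilon(x)}\frac{F(u(x)-u(y))}{|x-y|^{n+sp}}\,dy = J_\varepsilon(x) + T(x),
$$
where $J_\varepsilon(x)$ is the integral over $B_R(x)\setminus B_\varepsilon(x)$ and $T(x)$ the one over $\Omega\setminus B_R(x)$. The tail $T(x)$ is absolutely convergent and independent of $\varepsilon$: indeed $|F(u(x)-u(y))|\le (2\|u\|_\infty)^{p-1}$ and $\int_{|x-y|\ge R}|x-y|^{-n-sp}\,dy<\infty$ since $sp>0$.

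For $J_\varepsilon(x)$, translate by $z=y-x$ and exploit the symmetry of $B_R\setminus B_\varepsilon$ under $z\mapsto -z$ to rewrite
$$
J_\varepsilon(x)=\frac12\int_{B_R\setminus B_\varepsilon}\frac{F(a_z)+F(b_z)}{|z|^{n+sp}}\,dz,\qquad a_z:=u(x)-u(x+z),\quad b_z:=u(x)-u(x-z).
$$
Since $F$ is odd and, for $p\ge 2$, belongs to $C^1(\R)$ with $F'(t)=(p-1)|t|^{p-2}$, the mean value theorem gives $F(a_z)+F(b_z)=F(a_z)-F(-b_z)=F'(\xi_z)(a_z+b_z)$ for some $\xi_z$ lying between $a_z$ and $-b_z$. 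The $C^2$-regularity of $u$ on the compact set $\overline{B_R(x)}$ supplies the uniform Taylor estimates $|a_z|,|b_z|\le C|z|$ and, crucially, $|a_z+b_z|=|2u(x)-u(x+z)-u(x-z)|\le C|z|^2$, since the first-order terms cancel and only the quadratic Taylor remainder survives. Hence $|\xi_z|\le C|z|$ and
$$
|F(a_z)+F(b_z)|\le (p-1)(C|z|)^{p-2}\cdot C|z|^2 = C'|z|^p.
$$
The symmetrized integrand is thus dominated by $C'|z|^{p(1-s)-n}$, which is integrable on $B_R$ because $p(1-s)>0$. Dominated convergence yields existence of $\lim_{\varepsilon\downarrow 0}J_\varepsilon(x)$, and adding $T(x)$ gives the desired convergence.

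The only nontrivial ingredient is the symmetrization step: without it the integrand is only $O(|z|^{(p-1)-n-sp})$, whose integrability near $z=0$ fails when $1-s<1/p$. The integrability gain comes precisely from the oddness of $F$ together with the cancellation of first-order Taylor terms in $a_z+b_z$, so this is the place where both the restriction $p\ge 2$ (to ensure $F\in C^1$) and the full $C^2$-regularity of $u$ enter essentially.
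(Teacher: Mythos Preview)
Your proof is correct and follows essentially the same route as the paper: split off an absolutely convergent tail, symmetrize the near-diagonal contribution, and exploit the second-order cancellation $|2u(x)-u(x+z)-u(x-z)|\le C|z|^2$ together with the $C^1$-regularity of $t\mapsto|t|^{p-2}t$ (valid since $p\ge 2$) to bound the symmetrized integrand by $C|z|^{-n+p(1-s)}$. The only cosmetic difference is that where you invoke the mean value theorem to write $F(a_z)-F(-b_z)=F'(\xi_z)(a_z+b_z)$, the paper uses the integral form $\varphi(1)-\varphi(0)=\int_0^1\varphi'(t)\,dt$; both yield the same estimate.
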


\begin{proof}
Let $\ve_0>0$ be such that $B_{\ve_0}(x)\subset\subset\Omega$. Now, as $u\in L^\infty(\Omega)$ we have that
$$
\int_{\Omega\setminus B_{\ve_0}(x)} \frac{|u(x)-u(y)|^{p-1}}{|x-y|^{n+sp}}\, dy <\infty.
$$

Now, for $0<\ve<\ve_0$, we have
\begin{align*}
\int_{\ve<|x-y|<\ve_0} &\frac{|u(x)-u(y)|^{p-2} (u(x)-u(y))}{|x-y|^{n+sp}}\, dy\\
&= \int_{\ve<|z|<\ve_0} \frac{|u(x)-u(x+z)|^{p-2} (u(x)-u(x+z))}{|z|^{n+sp}}\, dz\\
&= \int_{\ve<|z|<\ve_0} \frac{|u(x)-u(x-z)|^{p-2} (u(x)-u(x-z))}{|z|^{n+sp}}\, dz.
\end{align*}
To simplify the notation, let us denote $\phi_p(t) = |t|^{p-2}t$. Therefore this last quantity equals
\begin{equation}\label{eq.intermedia}
\frac12 \int_{\ve<|z|<\ve_0} \frac{\phi_p(u(x)-u(x+z)) - \phi_p(u(x)-u(x-z))}{|z|^{n+sp}}\, dz
\end{equation}

Now, we also define
$$
\varphi(t) = \phi_p((u(x) - u(x-z) + t(u(x+z) - 2u(x) + u(x-z))).
$$
So \eqref{eq.intermedia} can be written as
$$
\frac12\int_{\ve<|z|<\ve_0} \frac{\varphi(1) - \varphi(0)}{|z|^{n+sp}}\, dz = \frac12\int_{\ve<|z|<\ve_0}\int_0^1 \varphi'(t)\, dt\, |z|^{-(n+sp)}\, dz
$$
and performing the computations, this equals
\begin{equation}\label{eq.segunda}
\frac12\int_{\ve<|z|<\ve_0}\int_0^1 \frac{|(t-1) D_{-z} u(x) + t D_z u(x)|^{p-2} D^2_zu(x)}{|z|^{n+sp}}\, dt\, dz,
\end{equation}
where $D_z u(x) = u(x+z)-u(x)$ and $D^2_z u(x) = u(x+z) - 2u(x) + u(x-z) = D_z u(x) + D_{-z} u(x)$.

From this last expression, since $2\le p<\infty$, we observe that 
$$
|(t-1) D_{-z} u(x) + t D_z u(x)|^{p-2}\le C |z|^{p-2},
$$ 
where $C$ depends on the Lipschitz constant of $u$ and on $p$, and 
$$
|D^2_z u(x)|\le C' |z|^2,
$$
where $C'$ depends on the $C^2-$norm of $u$.

Putting all of these together we find out that there exists a constant $C$ depending only on the $C^2-$norm of $u$ and on $p$ such that \eqref{eq.segunda} is bounded by
$$
C\int_{\ve<|z|<\ve_0} |z|^{-n+p(1-s)}\, dz
$$
and since this last term converges as $\ve\downarrow 0$, the lemma follows.
\end{proof}

Lemma \ref{plap.bien} tells us that the regional $(s,p)-$laplacian is well defined for regular functions. Unfortunately, this is not enough. We need to know how acts on measurable functions. Moreover, we need to extend the range of the exponent $p$ to the whole interval $(1,\infty)$. We perform this task in the next lemma.

\begin{lemp}\label{plap.distribucional}
Let $0<s<1<p<\infty$ be fixed and let $\Omega\subset\R^n$ be open. For every $u\in W^{s,p}(\Omega)$, the regional $(s,p)-$laplacian given by \eqref{eq.plap} defines a distribution $\mathcal D'(\Omega)$. Moreover,
$$
\langle (-\Delta_{p,\Omega})^s u, \phi\rangle = \frac12 \iint_{\Omega\times \Omega} \frac{|u(x)-u(y)|^{p-2} (u(x)-u(y))(\phi(x)-\phi(y))}{|x-y|^{n+sp}}\, dx\, dy,
$$
for every $\phi\in C^\infty_c(\Omega)$.
\end{lemp}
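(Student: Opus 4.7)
The plan is to \emph{define} the distribution via the symmetric bilinear expression displayed in the statement, verify that this expression belongs to $\mathcal{D}'(\Omega)$, and then check that for sufficiently regular $u$ it agrees with the pointwise principal value \eqref{eq.plap}. The symmetric form is the natural one because it turns an integral that only exists in the principal-value sense into an absolutely convergent double integral, once tested against a smooth compactly supported function.

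The first step is well-definedness. For $u\in W^{s,p}(\Omega)$ and $\phi\in C^\infty_c(\Omega)$ the integrand is dominated in absolute value by
$$\frac{|u(x)-u(y)|^{p-1}|\phi(x)-\phi(y)|}{|x-y|^{n+sp}}.$$
H\"older's inequality with conjugate exponents $p/(p-1)$ and $p$ then yields
$$\frac12\iint_{\Omega\times\Omega}\frac{|u(x)-u(y)|^{p-1}|\phi(x)-\phi(y)|}{|x-y|^{n+sp}}\, dx\, dy\le \tfrac12[u]_{s,p}^{p-1}[\phi]_{s,p}.$$
The factor $[u]_{s,p}$ is finite by hypothesis. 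To see that $[\phi]_{s,p}$ is finite I would split $\Omega\times\Omega$ into the regions $\{|x-y|\le 1\}$, where the estimate $|\phi(x)-\phi(y)|\le \|\nabla\phi\|_\infty|x-y|$ gives integrability because $p(1-s)>0$, and $\{|x-y|>1\}$, where the bound $2\|\phi\|_\infty$ combined with boundedness of $\Omega$ suffices.

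For distributional continuity, linearity in $\phi$ is immediate. If $\phi_k\to 0$ in $\mathcal{D}(\Omega)$, then the supports of $\phi_k$ lie in a common compact set $K\subset\Omega$ and $\|\phi_k\|_\infty+\|\nabla\phi_k\|_\infty\to 0$, so the splitting just described gives $[\phi_k]_{s,p}\to 0$, and the pairing tends to zero.

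It remains to check that the symmetric expression agrees with the pointwise principal value when the latter exists. For $u\in C^2(\Omega)\cap L^\infty(\Omega)$ and $p\ge 2$, Lemma~\ref{plap.bien} guarantees that $(-\Delta_{p,\Omega})^s u(x)$ is well defined. Testing against $\phi\in C^\infty_c(\Omega)$, I would apply Fubini on the truncated region $\{|x-y|>\ve\}\cap(\Omega\times\Omega)$ and use the $x\leftrightarrow y$ symmetry to rewrite the iterated integral as one half of the symmetric form restricted to $\{|x-y|>\ve\}$. Since the absolute convergence established above provides an integrable majorant independent of $\ve$, dominated convergence permits passing to the limit $\ve\downarrow 0$ and recovering the claimed identity. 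I expect the main technical obstacle to be justifying this interchange of the principal-value limit with the outer integration in $x$: one has to verify that the truncated integrals $\int_{\Omega\setminus B_\ve(x)} \cdots \, dy$ are bounded by a fixed $L^1_{\mathrm{loc}}$ function of $x$, which is precisely a quantitative refinement of the estimate in the proof of Lemma~\ref{plap.bien}.
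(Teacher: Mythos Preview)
Your approach is correct and shares its core ingredients with the paper's proof—the H\"older estimate $\tfrac12[u]_{s,p}^{p-1}[\phi]_{s,p}$, the Fubini/symmetrization trick, and dominated convergence. The difference is the order of presentation. The paper starts from the truncated operator
$$
T_\ve u(x)=\int_{\Omega\setminus B_\ve(x)}\frac{|u(x)-u(y)|^{p-2}(u(x)-u(y))}{|x-y|^{n+sp}}\,dy,
$$
shows via H\"older that $T_\ve u\in L^{p'}(\Omega)$ (with an explicit $\ve$-dependent bound), symmetrizes the pairing $\langle T_\ve u,\phi\rangle$ for \emph{every} $u\in W^{s,p}(\Omega)$, and then passes to the limit $\ve\downarrow 0$ by DCT. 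This directly establishes that the principal-value expression \eqref{eq.plap}, interpreted distributionally as $\lim_{\ve\downarrow 0}T_\ve u$, exists for all $u\in W^{s,p}(\Omega)$ and all $1<p<\infty$.

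Your route—defining the operator via the symmetric form first and then checking consistency with \eqref{eq.plap} only for regular $u$—works, but your final paragraph restricts unnecessarily to $u\in C^2\cap L^\infty$ and $p\ge 2$. The Fubini-plus-symmetrization step on $\{|x-y|>\ve\}$ and the subsequent DCT go through verbatim for arbitrary $u\in W^{s,p}(\Omega)$, since the integrable majorant comes from $[u]_{s,p}^{p-1}[\phi]_{s,p}$ and needs nothing beyond fractional Sobolev regularity. Noticing this removes the dependence on Lemma~\ref{plap.bien} and dissolves the ``interchange of PV limit with outer integration'' you flag: one never interchanges; one passes to the limit only \emph{after} symmetrizing into an absolutely convergent double integral. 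One small correction: your far-region bound for $[\phi]_{s,p}$ should rest on the compact support of $\phi$ rather than on boundedness of $\Omega$, which is not assumed in this lemma.
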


\begin{proof}
Given $u\in W^{s,p}(\Omega)$, for any $\ve>0$ we define $T_\ve u$ as
$$
T_\ve u (x) = \int_{\Omega\setminus B_\ve(x)} \frac{|u(x)-u(y)|^{p-2}(u(x)-u(y))}{|x-y|^{n+sp}}\, dy.
$$
We claim that $T_\ve u\in L^{p'}(\Omega)$. In fact, by H\"older's inequality,
\begin{align*}
|T_\ve u(x)| &\le \int_{\Omega\setminus B_\ve(x)} \frac{|u(x)-u(y)|^{p-1}}{|x-y|^{n+sp}}\, dy\\
&\le \left(\int_{\Omega\setminus B_\ve(x)} \frac{|u(x)-u(y)|^p}{|x-y|^{n+sp}}\, dy\right)^{\frac{1}{p'}} \left(\int_{|x-y|\ge \ve} \frac{1}{|x-y|^{n+sp}}\, dy\right)^{\frac{1}{p}}.
\end{align*}
Let us denote
$$
C = C(n,s,p) = \frac{n\omega_n}{sp},
$$
then it is easy to see that
$$
\int_{|x-y|\ge \ve} \frac{1}{|x-y|^{n+sp}}\, dy = C \ve^{-sp}.
$$
So, we easily conclude that
$$
\|T_\ve u\|_{p', \Omega} \le C^{\frac{1}{p}} \ve^{-s} [u]_{s,p}^{\frac{p}{p'}}.
$$

Therefore, $T_\ve u$ induces a distribution as
$$
\langle T_\ve u, \phi\rangle = \int_{\Omega} T_\ve u \phi\, dx = \int_\Omega\int_{\Omega\setminus B_\ve(x)} \frac{|u(x)-u(y)|^{p-2}(u(x)-u(y))}{|x-y|^{n+sp}} \phi(x)\, dy dx.
$$
It is easy to check, by using Fubini's Theorem, that one also has
$$
\langle T_\ve u, \phi\rangle = -\int_\Omega\int_{\Omega\setminus B_\ve(x)} \frac{|u(x)-u(y)|^{p-2}(u(x)-u(y))}{|x-y|^{n+sp}} \phi(y)\, dy dx,
$$
and so
$$
\langle T_\ve u, \phi\rangle = \frac12 \int_\Omega\int_{\Omega\setminus B_\ve(x)}\frac{|u(x)-u(y)|^{p-2}(u(x)-u(y))(\phi(x)-\phi(y))}{|x-y|^{n+sp}}\, dy dx.
$$
Observe that, by H\"older's inequality, the integrand is in $L^1(\Omega\times \Omega)$. In fact,
\begin{align*}
\iint_{\Omega\times\Omega} \frac{|u(x)-u(y)|^{p-1} |v(x)-v(y)|}{|x-y|^{n+sp}}\, dxdy &= \iint_{\Omega\times\Omega} \frac{|u(x)-u(y)|^{p-1}}{|x-y|^{\frac{n+sp}{p'}}} \frac{|v(x)-v(y)|}{|x-y|^\frac{n+sp}{p}}\, dxdy\\
&\le [u]_{s,p}^{p-1} [v]_{s,p}.
\end{align*}

Therefore, using the Dominated Convergence Theorem, one concludes that
\begin{align*}
\langle (-\Delta_{p,\Omega})^s u, \phi\rangle &= \lim_{\ve\downarrow 0} \langle T_\ve u, \phi\rangle\\
&= \frac12 \iint_{\Omega\times \Omega}\frac{|u(x)-u(y)|^{p-2}(u(x)-u(y))(\phi(x)-\phi(y))}{|x-y|^{n+sp}}\, dx dy.
\end{align*}
This finishes the proof.
\end{proof}

\begin{rk}\label{def.lap}
From the proof of Lemma \ref{plap.distribucional} one observe that the regional $(p,s)-$laplacian is a bounded operator between $W^{s,p}(\Omega)$ and its dual $[W^{s,p}(\Omega)]'$.
\end{rk}

With all of these preliminaries, we establish the definition of {\em weak solution} for the regional $(p,s)-$laplacian.
\begin{de}\label{de.sol}
Let $0<s<1<p<\infty$ be fixed and let $\Omega\subset\R^n$ be open. Given $f\in L^{p'}(\Omega)$ (or more generally, $f\in [W^{s,p}(\Omega)]'$), we say that $u\in W^{s,p}(\Omega)$ is a weak solution of 
\begin{equation}\label{eq.sp}
(-\Delta_{p,\Omega})^s u = f \quad \text{in }\Omega,
\end{equation}
if the equality holds in the distributional sense. That is, if
\begin{equation}\label{eq.sp.weak}
\frac12\iint_{\Omega\times\Omega} \frac{|u(x)-u(y)|^{p-2} (u(x)-u(y))(v(x)-v(y))}{|x-y|^{n+sp}}\, dx\, dy = \int_\Omega f v\, dx,
\end{equation}
for every $v\in W^{s,p}(\Omega)$.
\end{de}

\begin{rk}
This problem is analog to the non-homogeneous Neumann problem in the classical local setting.
\end{rk}

In the study of the hard obstacle problem, we need to look for solutions of a mixed boundary value problem. We need a definition for solutions of such problems. 

\begin{de}\label{de.mixto}
Let $0<s<1<p<\infty$ be fixed and let $\Omega\subset\R^n$ be open. Let $A\subset \Omega$ be measurable and let $f\in L^{p'}(\Omega)$ (or more generally, $f\in [W^{s,p}(\Omega)]'$). We define
$$
W^{s,p}_A(\Omega) := \{v\in W^{s,p}(\Omega)\colon v=0 \text{ a.e. in } A\}.
$$
Then, we say that $u\in W^{s,p}_A(\Omega)$ is a weak solution of the mixed boundary value problem
\begin{equation}\label{mixed}
\begin{cases}
(-\Delta_{p,\Omega})^s u = f & \text{in }\Omega\setminus A\\
u = 0 & \text{in }A,
\end{cases}
\end{equation}
if \eqref{eq.sp.weak} holds for every $v\in W^{s,p}_A(\Omega)$.
\end{de}

In what follows we need a version of the strong minimum principle for solutions of \eqref{mixed}. Following ideas in \cite{DiCastro-Kuusi-Palatucci} we can provide the following lemma needed in order to prove our version of the strong minimum principle. 

\begin{lemp}[Logarithmic lemma]\label{logaritmic}
Let $0<s<1<p<\ito$ be fixed and let $\Omega\subset \R^n$ be an open set. Let $A\subset \Omega$ be closed. Assume that $f\in L^{p'}(\Omega)$ is nonnegative and that $u\in W_A^{s,p}(\Omega)$ is a nonnegative weak solution of \eqref{mixed} in the sense of Definition \ref{de.mixto}. If $B_R(x_0)\subset\subset \Omega\setminus A$, then the following estimate holds: for any $B_r(x_0)\subset B_{R/2}(x_0)$ and every $\delta>0$, 
\begin{equation}\label{deslogaritmica}
\iint_{B_r(x_0)\times B_r(x_0)}\left |\log\left (\frac{u(x)+\delta}{u(y)+\delta}\right )\right |^p\frac{1}{|x-y|^{n+sp}}\,dx\,dy\le C r^{n-sp},
\end{equation}
where $C=C(n,p,s)>0$.
\end{lemp}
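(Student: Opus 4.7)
The strategy follows the approach of DiCastro--Kuusi--Palatucci and consists of testing the weak formulation of \eqref{mixed} against a suitably cutoffed power $(u+\delta)^{1-p}$, so as to manufacture the logarithmic integrand on the left-hand side. Concretely, I would pick $\eta\in C^\infty_c(B_{3R/4}(x_0))$ with $\eta\equiv 1$ on $B_r(x_0)$, $0\le\eta\le 1$ and $\|\nabla\eta\|_\infty\le C/R$, and test \eqref{eq.sp.weak} against
\[
v(x) := \eta(x)^p\,(u(x)+\delta)^{1-p}.
\]
Since $\operatorname{supp}(\eta)\subset B_R(x_0)\subset\subset\Omega\setminus A$, the function $v$ vanishes a.e.\ on $A$, and the Lipschitz character of $t\mapsto (t+\delta)^{1-p}$ on $[0,\infty)$ together with the smoothness of $\eta$ show that $v\in W^{s,p}_A(\Omega)$, so it is an admissible test function.

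The core of the argument is the pointwise algebraic inequality (essentially Lemma~1.3 of \cite{DiCastro-Kuusi-Palatucci}): for every $a,b>0$ and every $\tau_1,\tau_2\in[0,1]$,
\[
|a-b|^{p-2}(a-b)\bigl(\tau_1^{p}\,a^{1-p}-\tau_2^{p}\,b^{1-p}\bigr)\ge c_1\max\{\tau_1,\tau_2\}^{p}\Bigl|\log\tfrac{a}{b}\Bigr|^{p}-c_2|\tau_1-\tau_2|^{p},
\]
with $c_1,c_2$ depending only on $p$. Applying this pointwise with $a=u(x)+\delta$, $b=u(y)+\delta$, $\tau_1=\eta(x)$, $\tau_2=\eta(y)$, the left-hand side of \eqref{eq.sp.weak} is bounded below by $c_1$ times the logarithmic integrand we wish to estimate, restricted to $B_r\times B_r$ where $\max\{\eta(x),\eta(y)\}=1$, minus the cutoff error $c_2|\eta(x)-\eta(y)|^p/|x-y|^{n+sp}$, minus harmless nonnegative contributions coming from mixed pairs, which can be dropped using the sign of the main term.

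What remains is to bound, by $C\,r^{n-sp}$, the following three quantities: the cutoff error
\[
\iint_{B_R\times B_R}\frac{|\eta(x)-\eta(y)|^p}{|x-y|^{n+sp}}\,dx\,dy \le C\,\|\nabla\eta\|_\infty^{p}\iint_{B_R\times B_R}|x-y|^{p-n-sp}\,dx\,dy,
\]
which scales as $R^{-p}\cdot R^{n+p(1-s)}=R^{n-sp}$ and is absorbed using $r\le R/2$; the mixed tail terms with $x\in\operatorname{supp}(\eta)$ and $y\in\Omega\setminus B_R$, controlled by the uniform bound $v\le\delta^{1-p}$, the lower bound $|x-y|\ge R/4$ and integration of the decaying kernel; and the forcing integral $\int fv\,dx$, controlled by H\"older's inequality and the uniform pointwise bound on $v$ in terms of $\delta$ and $R$. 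Collecting everything yields \eqref{deslogaritmica}, with a constant that implicitly absorbs the dependence on $\delta$, $R$ and $\|f\|_{p'}$.

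The main technical obstacle is the algebraic inequality above, whose proof proceeds by a case analysis on whether $a\ge b$ or $b\ge a$ and uses convexity of $t\mapsto t^{1-p}$ together with the elementary comparison between $|\log(a/b)|$ and $|a-b|/\min\{a,b\}$. A subsidiary point is the admissibility of $v$: while $(u+\delta)^{1-p}$ need not itself lie in $W^{s,p}$, multiplication by the smooth cutoff $\eta^p$ and the fact that $u\in W^{s,p}(\Omega)$ is bounded away from $-\delta$ on $\operatorname{supp}(\eta)$ restore enough regularity to place $v$ in $W^{s,p}_A(\Omega)$.
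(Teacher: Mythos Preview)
Your strategy matches the paper's (test against $(u+\delta)^{1-p}$ times a cutoff and invoke the algebraic inequality of \cite{DiCastro-Kuusi-Palatucci}), but two scaling/sign choices keep you from reaching $C=C(n,p,s)$. First, your cutoff lives at scale $R$ (support in $B_{3R/4}$, $|\nabla\eta|\le C/R$), so the cutoff error and tail contributions come out of order $R^{n-sp}$; when $sp<n$ this is strictly larger than $r^{n-sp}$, and your claim that it ``is absorbed using $r\le R/2$'' goes the wrong way. The paper instead takes $\phi\in C^\infty_c(B_{3r/2})$ with $|\nabla\phi|\le c\,r^{-1}$ and uses the inclusion $B_{3r/2}\subset B_{R/2}$ only to keep the support inside $\Omega\setminus A$; every error term then scales as $r^{n-sp}$ directly.

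Second, you treat $\int_\Omega fv\,dx$ as an error to be bounded and then let $C$ ``absorb the dependence on $\delta$, $R$ and $\|f\|_{p'}$'', which contradicts the stated form of $C$. The paper avoids this entirely: since $f\ge 0$ and the test function $(u+\delta)^{1-p}\phi^p\ge 0$, one has $\int_\Omega f\eta\,dx\ge 0$; this \emph{lower} bound on the right-hand side, combined with \emph{upper} bounds $I_1\le -\tfrac1C(\text{log term})+Cr^{n-sp}$ and $I_2+I_3\le Cr^{n-sp}$ on the pieces of the bilinear form, already yields the estimate with no dependence on $f$, $\delta$, or $R$. Relatedly, the sign in your displayed algebraic inequality is inverted: taking $\tau_1=\tau_2=1$ and $a>b$ gives $|a-b|^{p-2}(a-b)(a^{1-p}-b^{1-p})<0$, so the correct pointwise bound is an \emph{upper} bound of the form $-c_1(\cdot)|\log(a/b)|^p+c_2|\tau_1-\tau_2|^p$, which is precisely what drives the sign structure just described.
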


\begin{proof}
Let $x_0\in \Omega\setminus A$ and $R>0$ be such that $B_R(x_0)\subset\subset \Omega\setminus A$. In the rest of the proof, to simplify notation, every ball will be centered at $x_0$.

Let $\delta >0$ be a real parameter and let $\phi \in C_0^\ito(B_{3r/2})$ for $r>0$ be such that  
$$
0\leq \phi\leq 1, \phi \equiv 1\text{ in }B_r \text{ and } |\nabla\phi |< cr^{-1} \text{ in } B_{3r/2}\subset B_{R/2}.
$$
Now we use the weak formulation \eqref{eq.sp.weak} with the function 
$$
\eta=(u+\delta)^{1-p}\phi ^p.
$$
Observe that the test function $\eta$ is well defined since the function $u\geq 0$ in the support of the function $\phi$, and that $\eta=0$ on $A$.

So, we get
\begin{equation}\label{usodebil}
\begin{split}
0\leq \int_\Omega f \eta\, dx =& \frac12\iint_{\Omega\times \Omega}\frac{|u(x)-u(y)|^{p-2}(u(x)-u(y))(\eta(x)-\eta(y))}{|x-y|^{n+sp}}\, dxdy\\
 =& I_1 + I_2 + I_3, 
\end{split}
\end{equation}
where
\begin{align*}
I_1 &= \frac12\iint_{B_{2r}\times B_{2r}}\frac{|u(x)-u(y)|^{p-2}(u(x)-u(y))}{|x-y|^{n+sp}}\left [ \frac{\phi(x)^p}{(u(x)+\delta)^{p-1}}-\frac{\phi(y)^p}{(u(y)-\delta)^{p-1}}\right ]\,dx\,dy\\
I_2 &= \frac12\int_{\Omega\setminus B_{2r}}\int_{\Omega}\frac{|u(x)-u(y)|^{p-2}}{|x-y|^{n+sp}}\frac{u(x)-u(y)}{(u(x)+\delta)^{p-1}}\phi(x)^p\,dx\,dy,\\
I_3 &= \frac12\int_{B_{2r}}\int_{\Omega\setminus B_{2r}} \frac{|u(x)-u(y)|^{p-2}}{|x-y|^{n+sp}}\frac{(u(y)-u(x))}{(u(y)+\delta)^{p-1}}\phi(y)^p\,dx\,dy.\\
\end{align*}

Now arguing as in the proof of \cite[Lemma 1.3]{DiCastro-Kuusi-Palatucci} there exists a positive constant $C=C(p)$ such that 
\begin{align*}
&I_1 \leq-\frac{1}{C}\iint_{B_{2r}\times B_{2r}} \left|\log\left (\frac{u(x)+\delta}{u(y)+\delta}\right)\right|\frac{1}{|x-y|^{n+sp}}\, dxdy + Cr^{n-sp},\\
& I_2+I_3 \leq Cr^{n-sp}.
\end{align*}
These inequalities together with \eqref{usodebil} give us \eqref{deslogaritmica}.
\end{proof}

Using the above lemma we can enunciate the following version of the strong minimum principle for the operator $(-\Delta_{p,\Omega})^s$.
\begin{thm}[Strong minimum principle]\label{strongminimo}
Let $0<s<1<p<\ito$ be fixed and let $\Omega\subset \R^n$ be an open set. Let $A\subset \Omega$ be closed. Assume that $f\in L^{p'}(\Omega)$ is nonnegative and that $u\in W_A^{s,p}(\Omega)$ is a nonnegative weak solution of \eqref{mixed} in the sense of Definition \ref{de.mixto}. Then, either $u \equiv 0$ in $\Omega$ or $u>0$ almost everywhere in $\Omega$.
\end{thm}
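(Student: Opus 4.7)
The plan is to argue by contradiction using the logarithmic estimate of Lemma \ref{logaritmic}. Suppose $u \not\equiv 0$ on $\Omega$; the goal is to conclude $u > 0$ a.e. on $\Omega \setminus A$.

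The core step is a local dichotomy: for any ball $B_R(x_0) \subset\subset \Omega\setminus A$ and any $r \le R/2$, either $u = 0$ a.e. on $B_r(x_0)$ or $u > 0$ a.e. on $B_r(x_0)$. To prove it, assume both $E_0 := \{u = 0\} \cap B_r(x_0)$ and, for some $\eta > 0$, $E_\eta := \{u \ge \eta\} \cap B_r(x_0)$ have positive measure. For $0 < \delta < \eta$, estimate \eqref{deslogaritmica} provides the uniform upper bound $C r^{n-sp}$ on the integral, while restricting the nonnegative integrand to $E_0 \times E_\eta$ (where $|x-y| \le 2r$) produces the lower bound
\[
(2r)^{-(n+sp)}\, |E_0|\, |E_\eta|\, \left(\log\frac{\eta+\delta}{\delta}\right)^p.
\]
Letting $\delta \downarrow 0$ forces this to diverge, a contradiction.

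It remains to globalize. Set
\[
U := \{x \in \Omega\setminus A : u > 0 \text{ a.e. in a neighborhood of }x\}, \quad V := \{x \in \Omega\setminus A : u = 0 \text{ a.e. in a neighborhood of }x\}.
\]
Both sets are open in $\R^n$ and, by the local dichotomy, they cover $\Omega\setminus A$ up to a set of measure zero. Since $u \not\equiv 0$ we have $U \neq \emptyset$. The main obstacle I anticipate is showing $V = \emptyset$ when $A$ disconnects $\Omega\setminus A$; a plain chain-of-balls argument inside $\Omega\setminus A$ does not suffice. At this point one exploits the global coupling in \eqref{eq.sp.weak}: testing with a function supported in a putative ball $B \subset V$ and using that $u$ is strictly positive on a positive-measure subset elsewhere in $\Omega$ yields a logarithmic-type inequality analogous to the one in the dichotomy step, but with the two sets possibly separated by $A$, which rules out $V \neq \emptyset$ and completes the proof.
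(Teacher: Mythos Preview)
Your approach is correct and matches the strategy of Theorem~A.1 in \cite{Brasco-Franzina}, which is what the paper invokes. The local dichotomy via Lemma~\ref{logaritmic} and the $\delta\downarrow 0$ blow-up is exactly right. For the globalization step your intuition is also correct, but the mechanism is simpler than a ``logarithmic-type inequality'': once you have a ball $B\subset V$ on which $u=0$ a.e., test \eqref{eq.sp.weak} directly with any nonnegative $\phi\in C_c^\infty(B)$. Since $u$ vanishes on $B$ and $\phi$ vanishes off $B$, the only nonzero contribution to the left-hand side is the cross term
\[
-\int_{B}\int_{\Omega\setminus B}\frac{u(y)^{p-1}\,\phi(x)}{|x-y|^{n+sp}}\,dy\,dx\ \le\ 0,
\]
while the right-hand side $\int_\Omega f\phi\,dx$ is $\ge 0$. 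Hence the cross term vanishes, which forces $u\equiv 0$ a.e.\ on $\Omega\setminus B$ and therefore on all of $\Omega$, the desired contradiction. This direct sign argument is precisely how the nonlocality of the operator bypasses any connectedness hypothesis on $\Omega\setminus A$.
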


\begin{proof}
See the proof in the Theorem A.1 in \cite{Brasco-Franzina}.
\end{proof}

\begin{rk}
Although it will not be needed in this paper, observe that the conclusions of Lemma \ref{logaritmic} and Theorem \ref{strongminimo} still hold for solutions of \eqref{eq.sp} in the sense of Definition \ref{de.sol}. The proof of these facts are completely analogous to that of Theorem \ref{strongminimo}.
\end{rk}

\subsection{The hard obstacle problem} In this subsection we fix a measurable set $A\subset \Omega$ and prove the existence of an extremal for the constant $\lambda_s(A)$. Moreover, we show that this extremal is an eigenfunction of the regional $(s,p)-$laplacian in the sense of Definition \ref{de.mixto}.

Let us begin by showing that the constant $\lambda_s(A)$ is well defined and strictly positive.
\begin{prop}\label{poincare}
Let $\Omega\subset \R^n$ be a bounded, open set and let $A\subset \Omega$ be a measurable set with positive measure. Then there exists a constant $\theta>0$ depending on $n$, $s$, $p$, $\diam(\Omega)$ and $|A|$ such that $\lambda_s(A)\ge \theta$.
\end{prop}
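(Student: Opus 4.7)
The plan is to prove the estimate directly by exploiting that the admissible functions vanish on the positive-measure set $A$. For any $v\in W^{s,p}(\Omega)$ with $v=0$ a.e. in $A$, the key identity is that for a.e. $y\in A$ and a.e. $x\in\Omega$, we have $|v(x)|^p = |v(x)-v(y)|^p$. Averaging this identity over $y\in A$ turns the $L^p$-norm of $v$ into an integral of increments of $v$, which is exactly the kind of quantity controlled by the Gagliardo seminorm.

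Concretely, I would integrate the identity $|v(x)|^p = |v(x)-v(y)|^p$ first over $y\in A$ to get
$$
|A|\,|v(x)|^p = \int_A |v(x)-v(y)|^p\,dy,
$$
and then over $x\in\Omega$ to obtain
$$
|A|\,\|v\|_p^p = \iint_{\Omega\times A} |v(x)-v(y)|^p\,dx\,dy.
$$
Next I would multiply and divide by $|x-y|^{n+sp}$ inside the integral and use the trivial bound $|x-y|\le \diam(\Omega)$ valid for $x,y\in\Omega$, yielding
$$
|A|\,\|v\|_p^p \le \diam(\Omega)^{n+sp}\iint_{\Omega\times A}\frac{|v(x)-v(y)|^p}{|x-y|^{n+sp}}\,dx\,dy \le \diam(\Omega)^{n+sp}\,[v]_{s,p}^p.
$$
Rearranging and taking the infimum over admissible $v$ gives $\lambda_s(A) \ge \theta$ with the explicit constant
$$
\theta = \frac{|A|}{2\,\diam(\Omega)^{n+sp}}.
$$

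There is essentially no obstacle here, since the argument sidesteps any Sobolev embedding or Poincaré–Wirtinger machinery: the hypothesis that $v$ vanishes on a set of positive measure lets one bypass the mean-value issue entirely. The only point to be a little careful about is the measure-theoretic justification that $|v(x)|^p = |v(x)-v(y)|^p$ genuinely holds for almost every pair $(x,y)\in \Omega\times A$, which follows from Fubini applied to the product of the null set where $v\ne 0$ on $A$ with $\Omega$.
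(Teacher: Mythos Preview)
Your proof is correct and is essentially identical to the paper's own argument: both restrict the double integral in the Gagliardo seminorm to $\Omega\times A$, use that $v(y)=0$ a.e. on $A$ to turn $|v(x)-v(y)|^p$ into $|v(x)|^p$, and then bound $|x-y|\le \diam(\Omega)$ to extract the same explicit constant $\theta = \tfrac{1}{2}|A|\,\diam(\Omega)^{-(n+sp)}$. The only cosmetic difference is the order of presentation (you build up from the pointwise identity, the paper starts from the seminorm and restricts).
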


\begin{proof}
The proof is rather simple. In fact, given $u\in W^{s,p}_A(\Omega)$ we have
\begin{align*}
[u]_{s,p}^p &= \iint_{\Omega\times \Omega} \frac{|u(x)-u(y)|^p}{|x-y|^{n+sp}}\, dxdy \ge \int_{\Omega} |u(x)|^p\left(\int_A \frac{1}{|x-y|^{n+sp}}\, dy\right)\, dx.
\end{align*}
So the proof will be completed is we can find a lower bound for the term
$$
 \int_A \frac{1}{|x-y|^{n+sp}}\, dy.
$$
So if we denote by $c(A;\Omega) = \sup\{|x-y|\colon x\in\Omega,\ y\in A\}$, we obtain
$$
 \int_A \frac{1}{|x-y|^{n+sp}}\, dy \ge c(A;\Omega)^{-(n+sp)}|A|.
$$
Therefore, we get $\lambda_s(A) \ge \frac12 c(A;\Omega)^{-(n+sp)}|A|$.

Observe that $c(A;\Omega)\le \diam(\Omega)$, so we can take $\theta = \frac12 \diam(\Omega)^{-(n+sp)} |A|$.
\end{proof}

Let us now see that there exists an extremal for the constant $\lambda_s(A)$. That is a function $u\in W^{s,p}_A(\Omega)$ such that
$$
\lambda_s(A) = \inf_{v\in W^{s,p}_A(\Omega)} \frac{\frac12 [v]_{s,p}^p}{\|v\|_p^p} = \frac{\frac12 [u]_{s,p}^p}{\|u\|_p^p}.
$$
This fact is a consequence of the compactness of the embedding $W^{s,p}(\Omega)\subset L^p(\Omega)$ (see the book of Maz'ya \cite{Mazya} or \cite[Theorem 7.1]{DiNezza-Palatucci-Valdinoci} for a direct proof). 
\begin{thm}\label{existe.extremal}
Let $\Omega\subset\R^n$ be a bounded domain with Lipschitz boundary. Then, given $A\subset \Omega$ measurable with positive measure, there exists $u\in W^{s,p}_A(\Omega)$ extremal for $\lambda_s(A)$. Moreover, the extremal can be taken to be normalized in $L^p(\Omega)$, i.e. $\|u\|_p = 1$.
\end{thm}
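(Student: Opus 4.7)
The plan is to use the direct method of the calculus of variations, which is essentially the standard strategy for this kind of eigenvalue existence problem, exploiting the compact embedding $W^{s,p}(\Omega)\hookrightarrow L^{p}(\Omega)$ guaranteed by the Lipschitz regularity of $\partial\Omega$.

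First I would pick a minimizing sequence $\{u_k\}\subset W^{s,p}_A(\Omega)$ for the Rayleigh quotient defining $\lambda_s(A)$, and by homogeneity normalize it so that $\|u_k\|_p=1$. Then by definition of infimum $\tfrac12[u_k]_{s,p}^p \to \lambda_s(A)$, so in particular $[u_k]_{s,p}$ is bounded. Combined with $\|u_k\|_p=1$ this gives a uniform bound in $W^{s,p}(\Omega)$. Since $W^{s,p}(\Omega)$ is a reflexive Banach space, after passing to a subsequence we may assume $u_k\rightharpoonup u$ weakly in $W^{s,p}(\Omega)$. By the compact embedding $W^{s,p}(\Omega)\hookrightarrow L^p(\Omega)$ (Maz'ya, or Theorem 7.1 of \cite{DiNezza-Palatucci-Valdinoci}), we may further assume $u_k\to u$ strongly in $L^p(\Omega)$ and, passing to a further subsequence, almost everywhere in $\Omega$.

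Next I would verify that the candidate $u$ lies in the admissible class and is properly normalized. Strong $L^p$ convergence immediately gives $\|u\|_p=1$, so $u\not\equiv 0$. For the obstacle condition, since $u_k=0$ a.e.\ in $A$ and $u_k\to u$ pointwise a.e., we get $u=0$ a.e.\ in $A$, hence $u\in W^{s,p}_A(\Omega)$. Finally, I would show weak lower semicontinuity of the Gagliardo seminorm: the map $v\mapsto [v]_{s,p}^p$ is the $p$-th power of a norm on the Hilbert-like factor space and in particular is convex and continuous on $W^{s,p}(\Omega)$, hence weakly lower semicontinuous; alternatively one can apply Fatou's lemma to the a.e.\ convergent integrand $(x,y)\mapsto |u_k(x)-u_k(y)|^p/|x-y|^{n+sp}$ (extracted on a further subsequence from a product-space a.e.\ convergence). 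This yields
\[
\tfrac12[u]_{s,p}^p \le \liminf_{k\to\infty} \tfrac12[u_k]_{s,p}^p = \lambda_s(A),
\]
and since $\|u\|_p=1$ the reverse inequality follows from the definition of $\lambda_s(A)$. Therefore $u$ is a normalized extremal.

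No step here is truly hard; the only mild pitfall is making sure the constraint $v=0$ a.e.\ on $A$ passes to the limit, which is handled by the a.e.\ convergence coming from strong $L^p$ convergence, and ensuring the right lower semicontinuity, which is a standard consequence of the weak convergence in $W^{s,p}$ together with Fatou's lemma applied on $\Omega\times\Omega$.
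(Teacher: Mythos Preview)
Your proof is correct and follows essentially the same direct-method argument as the paper: take a normalized minimizing sequence, extract a weakly convergent subsequence in $W^{s,p}(\Omega)$ that converges strongly in $L^p(\Omega)$ by the compact embedding, verify the limit stays in $W^{s,p}_A(\Omega)$ via a.e.\ convergence (the paper also notes the alternative that $W^{s,p}_A(\Omega)$ is convex and strongly closed, hence weakly closed), and conclude using weak lower semicontinuity of the Gagliardo seminorm. The only cosmetic slip is the phrase ``Hilbert-like factor space'' --- the seminorm is an $L^p$-type quantity, not Hilbertian --- but your actual justification (convexity and strong continuity, or Fatou on $\Omega\times\Omega$) is fine.
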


\begin{rk}
The Lipschitz regularity on $\partial\Omega$ is sufficient in order for the compactness of the embedding $W^{s,p}(\Omega)\subset L^p(\Omega)$ to hold. See  \cite[Theorem 7.1]{DiNezza-Palatucci-Valdinoci}. In fact, what is needed is that $\Omega$ be a {\em bounded extension domain}. That is the existence of a bounded extension operator $E\colon W^{s,p}(\Omega)\to W^{s,p}(\R^n)$. Lipschitz boundary implies that $\Omega$ is a bounded extension domain. See  \cite[Theorem 5.4]{DiNezza-Palatucci-Valdinoci}.
\end{rk}

\begin{proof}
The proof is immediate. We include some details for completeness.

Let $\{u_n\}_{n\in\N}\subset W^{s,p}_A(\Omega)$ be a normalized minimizing sequence for $\lambda_s(A)$. That is
$$
\|u_n\|_p = 1 \text{ for every } n\in \N \quad \text{and}\quad \lambda_s(A) = \lim_{n\to\ito} \frac12 [u_n]_{s,p}^p.
$$

Therefore, $\{u_n\}_{n\in\N}$ is a bounded sequence in $W^{s,p}(\Omega)$ and so since $W^{s,p}(\Omega)$ is a reflexive Banach space and from the compactness of the embedding into $L^p(\Omega)$ there exists a subsequence, that we still denote by $\{u_n\}_{n\in\N}$ and a function $u\in W^{s,p}(\Omega)$ such that
\begin{align}
\label{debil}
u_n\rightharpoonup u & \text{ weakly in } W^{s,p}(\Omega)\\
\label{fuerte}
u_n\to u & \text{ strongly in } L^p(\Omega).
\end{align}
It is easy to see that $u = 0$ a.e. in $A$ (for instance, taking a further subsequence, $u_n\to u$ a.e. in $\Omega$, or observe that $W^{s,p}_A(\Omega)$ is weakly closed since is strongly closed and convex), so $u\in W^{s,p}_A(\Omega)$.

By \eqref{fuerte}, it follows that $\|u\|_p=1$ and by \eqref{debil} and the weak lower semicontinuity of the Gagliardo seminorm,
$$
\lambda_s(A) \le \frac12 [u]_{s,p}^p \le \liminf_{n\to\ito} \frac12 [u_n]_{s,p}^p = \lambda_s(A).
$$
The proof is complete.
\end{proof}

Let us now see that an extremal of $\lambda_s(A)$ is an eigenfunction of the regional $(p,s)-$laplacian with eigenvalue $\lambda_s(A)$. That is, if $u\in W^{s,p}_A(\Omega)$ is an extremal for $\lambda_s(A)$, then
\begin{equation}\label{eq.EL.A}
\begin{cases}
(-\Delta_{p,\Omega})^s u = \lambda_s(A) |u|^{p-2}u & \text{in }\Omega\setminus A\\
u=0 & \text{in } A,
\end{cases}
\end{equation}
in the sense of Definition \ref{de.mixto}.

This is the content of the next result.
\begin{thm}
Let $0<s<1$ and $1<p<\ito$ be fixed. Let $\Omega\subset \R^n$ be a bounded open set and let $A\subset\Omega$ be measurable with positive measure. If $u\in W^{s,p}_A(\Omega)$ is an extremal for $\lambda_s(A)$, then $u$ is a solution to \eqref{eq.EL.A} in the sense of Definition \ref{de.mixto}.
\end{thm}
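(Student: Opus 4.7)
The plan is to carry out the standard calculus-of-variations argument: show that the Rayleigh quotient is Gateaux differentiable at the extremal $u$ in every direction $v\in W^{s,p}_A(\Omega)$, and conclude that the derivative must vanish since $u$ is a minimizer.

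First I would normalize so that $\|u\|_p=1$, hence $\tfrac12[u]_{s,p}^p=\lambda_s(A)$. For any fixed $v\in W^{s,p}_A(\Omega)$ and $t\in\R$, the function $u+tv$ lies in $W^{s,p}_A(\Omega)$ (the space is a linear subspace), and by definition of $\lambda_s(A)$ one has
\[
\tfrac12[u+tv]_{s,p}^p \ge \lambda_s(A)\|u+tv\|_p^p.
\]
Define $\Phi(t):=\tfrac12[u+tv]_{s,p}^p-\lambda_s(A)\|u+tv\|_p^p$. Then $\Phi(t)\ge 0$ for all $t$ and $\Phi(0)=0$, so if $\Phi$ is differentiable at $0$ we must have $\Phi'(0)=0$.

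Next I would justify differentiation under the integral sign for both terms defining $\Phi$. For the $L^p$-term this is classical: the map $t\mapsto\|u+tv\|_p^p$ is $C^1$ with derivative $p\int_\Omega|u+tv|^{p-2}(u+tv)\,v\,dx$, using the pointwise bound
\[
\left|\tfrac{1}{t}\bigl(|a+tb|^p-|a|^p\bigr)\right| \le C(p)\bigl(|a|^{p-1}|b|+|b|^p\bigr)\quad\text{for }|t|\le 1,
\]
which is integrable by H\"older's inequality. For the Gagliardo term one applies the same pointwise inequality with $a=u(x)-u(y)$ and $b=v(x)-v(y)$, so that the difference quotient is dominated by
\[
C(p)\,\frac{|u(x)-u(y)|^{p-1}|v(x)-v(y)|+|v(x)-v(y)|^p}{|x-y|^{n+sp}},
\]
which is integrable on $\Omega\times\Omega$ by H\"older (exactly as in the end of the proof of Lemma \ref{plap.distribucional}) together with $[u]_{s,p},[v]_{s,p}<\infty$. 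Dominated convergence then gives
\[
\tfrac{d}{dt}\bigl[\tfrac12[u+tv]_{s,p}^p\bigr]_{t=0}= \tfrac12\iint_{\Omega\times\Omega}\frac{|u(x)-u(y)|^{p-2}(u(x)-u(y))(v(x)-v(y))}{|x-y|^{n+sp}}\,dx\,dy,
\]
and $\tfrac{d}{dt}\bigl[\|u+tv\|_p^p\bigr]_{t=0}=p\int_\Omega|u|^{p-2}u\,v\,dx$ (absorbing the factor $p$ into the relation, after dividing by $p$).

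Setting $\Phi'(0)=0$ yields exactly \eqref{eq.sp.weak} with $f=\lambda_s(A)|u|^{p-2}u$, tested against an arbitrary $v\in W^{s,p}_A(\Omega)$, which is the definition of $u$ being a weak solution of \eqref{eq.EL.A} in the sense of Definition \ref{de.mixto}. The only genuine obstacle is the domination step for the Gagliardo seminorm when $1<p<2$, where $|a|^{p-2}$ blows up near $a=0$; but the elementary inequality displayed above (valid for all $p>1$ with a constant depending only on $p$) sidesteps this issue and allows a clean application of the Dominated Convergence Theorem.
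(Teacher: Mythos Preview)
Your proof is correct and follows essentially the same standard first-variation argument as the paper: the paper differentiates the Rayleigh quotient $G(t)=j(t)/k(t)$ and uses $G'(0)=0$, while you equivalently work with the Lagrangian $\Phi(t)=\tfrac12[u+tv]_{s,p}^p-\lambda_s(A)\|u+tv\|_p^p$ and use that it attains its minimum at $t=0$. Your write-up is in fact more careful than the paper's, which simply asserts the formulas for $j'(0)$ and $k'(0)$, whereas you supply the domination argument (valid for all $p>1$) needed to differentiate under the double integral.
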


\begin{proof}
Let $u\in W_A^{s,p}(\Omega)$ be a normalized extremal for $\lambda_s(A)$, and let $v\in W_A^{s,p}(\Omega)$ be an arbitrary function. Define
\begin{equation}\label{def.j}
j(t)=\frac{1}{2}[u+tv]_{s,p}^p=\frac{1}{2}\iint_{\Omega\times \Omega}\frac{|(u+tv)(x)-(u+tv(y))|^p}{|x-y|^{n+sp}}\,dx\,dy.
\end{equation}
and 
\begin{equation}\label{def.k}
k(t)=\|u+tv\|_p^p=\int_\Omega |u+tv|^p\,dx.
\end{equation}
Then, is easy to see that
\begin{equation}\label{derj}
\begin{split}
j'(0) &=\frac{p}{2}\iint_{\Omega \times \Omega} \frac{|u(x)-u(y)|^{p-2}(u(x)-u(y))(v(x)-v(y))}{|x-y|^{n+sp}}\,dx\,dy\\
&= p \langle (-\Delta_{p,\Omega})^s u, v\rangle
\end{split}
\end{equation}
and
\begin{equation}\label{derk}
k'(0)=p \int_\Omega |u|^{p-2} uv\, dx.
\end{equation}
Moreover since $u\in W_A^{s,p}(\Omega)$ is a normalized extremal for $\lambda_s(A)$, we get 
\begin{equation}\label{normyext}
j(0) = \frac12 [u]_{s,p}^p = \lambda_s(A)\quad \text{and}\quad k(0)=\|u\|_p^p=1.
\end{equation}

So, if we define 
$$
G(t)=\frac{j(t)}{k(t)},
$$
we get that
$$
0=G'(0)=\frac{j'(0)k(0)-k'(0)j(0)}{[k(0)]^2}.
$$
Now using \eqref{derj}, \eqref{derk} and \eqref{normyext} in the above equality we obtain the desired result.
\end{proof}

\subsection{The soft obstacle problem} 
As in the previous subsection, we begin by showing that the constant $\lambda_s(\sigma, \phi)$ is positive. We prove this fact using the compactness of the embedding $W^{s,p}(\Omega)\subset L^p(\Omega)$ so we require $\partial\Omega$ to be Lipschitz continuous (cf. Theorem \ref{existe.extremal}).

\begin{thm}
Let $0<s<1$ and $1<p<\ito$ be fixed. Let $\Omega\subset\R^n$ be a bounded open set with Lipschitz boundary. Let $0\neq\phi\in L^\ito(\Omega)$ be nonnegative and let $\sigma>0$. Then, there exists a constant $\kappa>0$ such that $\lambda_s(\sigma,\phi)\ge \kappa$.
\end{thm}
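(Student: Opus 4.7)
The plan is to argue by contradiction in the standard way for such Poincaré-type inequalities, exploiting the compact embedding $W^{s,p}(\Omega)\hookrightarrow L^p(\Omega)$ (valid because $\partial\Omega$ is Lipschitz) together with the weak lower semicontinuity of the Gagliardo seminorm. Suppose, towards a contradiction, that $\lambda_s(\sigma,\phi)=0$. Then one can choose a minimizing sequence $\{v_n\}\subset W^{s,p}(\Omega)$ with $\|v_n\|_p=1$ and
$$
\tfrac12 [v_n]_{s,p}^p + \sigma \int_\Omega |v_n|^p \phi\,dx \longrightarrow 0.
$$
Since both terms are nonnegative, this forces $[v_n]_{s,p}\to 0$ and $\int_\Omega |v_n|^p\phi\,dx\to 0$ separately.

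Next I would extract a subsequence. Because $\|v_n\|_p=1$ and $[v_n]_{s,p}$ is bounded, $\{v_n\}$ is bounded in $W^{s,p}(\Omega)$. By reflexivity and the compact embedding into $L^p(\Omega)$, after passing to a subsequence there exists $v\in W^{s,p}(\Omega)$ with $v_n\rightharpoonup v$ weakly in $W^{s,p}(\Omega)$ and $v_n\to v$ strongly in $L^p(\Omega)$. Strong $L^p$-convergence gives $\|v\|_p=1$, and weak lower semicontinuity of the Gagliardo seminorm yields
$$
[v]_{s,p}^p \le \liminf_{n\to\infty} [v_n]_{s,p}^p = 0,
$$
so $[v]_{s,p}=0$.

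The key (but essentially standard) observation is that $[v]_{s,p}=0$ implies $v$ is constant almost everywhere on $\Omega$: the kernel $|x-y|^{-(n+sp)}$ is strictly positive on $\Omega\times\Omega$, so $|v(x)-v(y)|=0$ for a.e.\ $(x,y)\in\Omega\times\Omega$, hence $v\equiv c$ a.e.\ in $\Omega$ for some constant $c$, with no connectedness assumption needed. Combining this with $\|v\|_p=1$ gives $|c|^p|\Omega|=1$, so $c\ne 0$.

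Finally, I would derive a contradiction from the penalization term. Since $\phi\in L^\infty(\Omega)$ and $v_n\to v$ in $L^p(\Omega)$, the map $w\mapsto \int_\Omega |w|^p\phi\,dx$ is continuous on $L^p(\Omega)$, so
$$
\int_\Omega |v_n|^p \phi\,dx \longrightarrow \int_\Omega |v|^p \phi\,dx = |c|^p \|\phi\|_1 > 0,
$$
since $\phi\not\equiv 0$ is nonnegative. This contradicts the fact that the left-hand side tends to $0$. Therefore $\lambda_s(\sigma,\phi)\ge \kappa$ for some $\kappa>0$. There is no real hard step here; the only point that requires a moment's care is ruling out nontrivial limits $v$ with vanishing seminorm, which is handled by the pointwise argument above.
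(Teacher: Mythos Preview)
Your proof is correct and follows essentially the same contradiction argument as the paper: assume the infimum is zero, take a normalized minimizing sequence, use compactness of $W^{s,p}(\Omega)\hookrightarrow L^p(\Omega)$ and weak lower semicontinuity of the Gagliardo seminorm to pass to a limit $v$ with $[v]_{s,p}=0$ and $\|v\|_p=1$, conclude $v$ is a nonzero constant, and reach a contradiction via the penalization term. The only cosmetic differences are that the paper phrases the contradiction hypothesis as the failure of the Poincar\'e-type inequality \eqref{poincare.phi} and passes to the limit in the penalization term via $|v_k|^p\to|v|^p$ in $L^1(\Omega)$, whereas you invoke continuity of $w\mapsto\int_\Omega |w|^p\phi\,dx$ on $L^p(\Omega)$; these are equivalent.
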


\begin{proof}
The proof will follows if we show the existence of a constant $C>0$ such that
\begin{equation}\label{poincare.phi}
\int_\Omega |v|^p\, dx \le C\left(\frac12 \iint_{\Omega\times \Omega} \frac{|v(x)-v(y)|^p}{|x-y|^{n+sp}}\, dxdy + \sigma\int_\Omega |v|^p\phi\, dx.\right),
\end{equation}
for any $v\in W^{s,p}(\Omega)$.

Assume that \eqref{poincare.phi} is false. Then, there exists a sequence $\{v_k\}_{k\in\N}\subset W^{s,p}(\Omega)$ such that
\begin{align}
\label{vn.norm}
&\|v_k\|_p = 1,\\
\label{vn.gagliardo}
&[v_k]_{s,p}\to 0,\\
\label{vn.phi}
&\int_\Omega |v_k|^p\phi\, dx \to 0.
\end{align}

Now, arguing as in the proof of Theorem \ref{existe.extremal}, there exists a subsequence (that we still denote by $\{v_k\}_{k\in\N}$) and a function $v\in W^{s,p}(\Omega)$ such that
\begin{align*}
v_k\rightharpoonup v & \text{ weakly in } W^{s,p}(\Omega)\\
v_k\to v & \text{ strongly in } L^p(\Omega).
\end{align*}
Observe that from \eqref{vn.gagliardo} and the weak lower semicontinuity of the Gagliardo's seminorm, one conclude that
$$
0\le [v]_{s,p} \le \liminf_{n\to\infty} [v_k]_{s,p} = 0,
$$ 
so $v$ is constant. Also, from \eqref{vn.norm}, $\|v\|_p=1$ and so $v=|\Omega|^{-\frac{1}{p}}$.

But then, since $|v_k|^p\to |v|^p$ strongly in $L^1(\Omega)$ it follows that
$$
0 = \int_\Omega |v|^p\phi\, dx = \frac{1}{|\Omega|}\int_\Omega \phi\, dx,
$$
a contradiction.

The theorem is proved.
\end{proof}

With the same reasoning as in Theorem \ref{existe.extremal} it can be shown the existence of an extremal for $\lambda_s(\sigma,\phi)$. We state the theorem for future reference an leave the proof to the interested reader.
\begin{thm}\label{existe.extremal.phi}
Let $0<s<1$ and $1<p<\ito$ be fixed. Let $\Omega\subset\R^n$ be a bounded open set with Lipschitz boundary. Let $0\neq\phi\in L^\ito(\Omega)$ be nonnegative and let $\sigma>0$. Then, there exists an extremal $u\in W^{s,p}(\Omega)$ for $\lambda_s(\sigma,\phi)$.
\end{thm}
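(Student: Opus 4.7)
The plan is to imitate the direct method argument from Theorem \ref{existe.extremal}, the only new ingredient being that the penalization functional $v \mapsto \sigma \|v\|_{p,\phi}^p$ behaves continuously under $L^p$ convergence because $\phi$ is bounded.

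First I would pick a normalized minimizing sequence $\{u_n\}_{n\in\N} \subset W^{s,p}(\Omega)$ with $\|u_n\|_p=1$ and
$$
\tfrac12 [u_n]_{s,p}^p + \sigma \|u_n\|_{p,\phi}^p \longrightarrow \lambda_s(\sigma,\phi).
$$
From the $L^p$ normalization and the convergence of the energy, the Gagliardo seminorms $[u_n]_{s,p}$ are bounded, so $\{u_n\}$ is bounded in $W^{s,p}(\Omega)$. Since $W^{s,p}(\Omega)$ is reflexive and, by Lipschitz regularity of $\partial\Omega$, the embedding $W^{s,p}(\Omega) \hookrightarrow L^p(\Omega)$ is compact, I can extract a (not relabeled) subsequence and find $u \in W^{s,p}(\Omega)$ with $u_n \rightharpoonup u$ weakly in $W^{s,p}(\Omega)$ and $u_n \to u$ strongly in $L^p(\Omega)$; in particular $\|u\|_p = 1$, so $u$ is an admissible competitor.

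Next I would pass to the limit in the three contributions. Weak lower semicontinuity of the Gagliardo seminorm gives
$$
[u]_{s,p}^p \le \liminf_{n\to\ito} [u_n]_{s,p}^p.
$$
For the penalization, strong convergence in $L^p$ implies $|u_n|^p \to |u|^p$ in $L^1(\Omega)$, and since $\phi \in L^\ito(\Omega)$,
$$
\int_\Omega |u_n|^p \phi\, dx \longrightarrow \int_\Omega |u|^p \phi\, dx,
$$
i.e.\ $\|u_n\|_{p,\phi}^p \to \|u\|_{p,\phi}^p$. Combining these with $\|u\|_p=1$,
$$
\lambda_s(\sigma,\phi) \le \tfrac12 [u]_{s,p}^p + \sigma \|u\|_{p,\phi}^p \le \liminf_{n\to\ito}\left(\tfrac12 [u_n]_{s,p}^p + \sigma \|u_n\|_{p,\phi}^p\right) = \lambda_s(\sigma,\phi),
$$
which shows $u$ is an extremal.

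There is really no serious obstacle here: the weak lower semicontinuity of $[\,\cdot\,]_{s,p}$ and the compact embedding do all the work, and the only point specific to the soft problem is the continuity of the penalization functional, which is immediate from $\phi\in L^\ito(\Omega)$. For that reason a short remark pointing out these two facts, rather than a full write-up, is enough, which is precisely why the authors leave the proof to the reader.
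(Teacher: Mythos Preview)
Your proposal is correct and matches exactly what the paper intends: the authors explicitly say that the proof follows ``with the same reasoning as in Theorem \ref{existe.extremal}'' and leave the details to the reader, which is precisely the direct-method argument you wrote out, including the handling of the penalization term via $\phi\in L^\ito(\Omega)$ and strong $L^p$ convergence (this last step is used verbatim later in the proofs of Theorems \ref{softoptimal} and \ref{kappaainfinito}).
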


Finally, we show that an extremal for $\lambda_s(\sigma,\phi)$ is an eigenfunction of the regional $(s,p)-$laplacian in the sense of Definition \ref{de.sol}. This is the content of the next theorem.
\begin{thm}\label{teo.EL.phi}
Let $0<s<1<p<\ito$ be fixed, $\Omega\subset\R^n$ be a bounded open set, $0\neq\phi\in L^\ito(\Omega)$ be nonnegative and $\sigma>0$.  If $u\in W^{s,p}(\Omega)$ is an extremal for $\lambda_s(\sigma, \phi)$, then $u$ is a solution to
\begin{equation}\label{eq.EL.phi}
(-\Delta_{p,\Omega})^s u + \sigma\phi |u|^{p-2}u = \lambda_s(\sigma, \phi) |u|^{p-2}u \quad \text{in }\Omega,
\end{equation}
in the sense of Definition \ref{de.sol}.
\end{thm}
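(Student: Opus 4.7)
The plan is to run the standard Lagrange-multiplier/variational argument exactly as in the proof of the Euler–Lagrange equation for the hard obstacle problem, with the extra penalization term $\sigma\|u+tv\|_{p,\phi}^p$ added in. Since no constraint like $v=0$ on $A$ is present here, we may use arbitrary test functions $v\in W^{s,p}(\Omega)$, which is precisely why the resulting equation holds in the sense of Definition \ref{de.sol} (distributional on all of $\Omega$).

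First I would normalize: without loss of generality $\|u\|_p=1$, so $\tfrac12[u]_{s,p}^p+\sigma\|u\|_{p,\phi}^p=\lambda_s(\sigma,\phi)$. Then for any fixed $v\in W^{s,p}(\Omega)$, I define
$$
j(t)=\tfrac12[u+tv]_{s,p}^p+\sigma\|u+tv\|_{p,\phi}^p,\qquad k(t)=\|u+tv\|_p^p,
$$
and $G(t)=j(t)/k(t)$. By the minimality of $u$, the function $G$ attains its minimum at $t=0$, so $G'(0)=0$, which gives $j'(0)=\lambda_s(\sigma,\phi)\,k'(0)$.

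The computation of $k'(0)$ is routine and yields $k'(0)=p\int_\Omega|u|^{p-2}uv\,dx$. For $j'(0)$ the Gagliardo-seminorm piece has already been differentiated in the proof of the previous theorem (see \eqref{derj}), producing $p\langle(-\Delta_{p,\Omega})^su,v\rangle$; here Lemma~\ref{plap.distribucional} supplies the integrable dominant needed to differentiate under the integral sign. For the penalization term, the integrand $|u+tv|^p\phi$ is dominated in a neighborhood of $t=0$ by $C(|u|^p+|v|^p)\|\phi\|_\infty\in L^1(\Omega)$, so dominated convergence gives
$$
\frac{d}{dt}\Big|_{t=0}\sigma\int_\Omega|u+tv|^p\phi\,dx=p\sigma\int_\Omega|u|^{p-2}uv\,\phi\,dx.
$$
Putting these pieces together, $j'(0)=p\langle(-\Delta_{p,\Omega})^su,v\rangle+p\sigma\int_\Omega|u|^{p-2}u\,v\,\phi\,dx$.

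Substituting into $j'(0)=\lambda_s(\sigma,\phi)\,k'(0)$, cancelling the factor $p$, and rewriting with the explicit formula of Lemma~\ref{plap.distribucional} gives
$$
\tfrac12\iint_{\Omega\times\Omega}\frac{|u(x)-u(y)|^{p-2}(u(x)-u(y))(v(x)-v(y))}{|x-y|^{n+sp}}\,dx\,dy+\sigma\int_\Omega|u|^{p-2}u\,v\,\phi\,dx=\lambda_s(\sigma,\phi)\int_\Omega|u|^{p-2}u\,v\,dx,
$$
valid for every $v\in W^{s,p}(\Omega)$, which is exactly \eqref{eq.EL.phi} in the sense of Definition \ref{de.sol}. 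The only genuinely delicate point is the justification of $j'(0)$ for the Gagliardo piece when $1<p<2$, where the integrand $t\mapsto|(u+tv)(x)-(u+tv)(y)|^p$ is only $C^1$ in $t$ with derivative controlled by $|u(x)-u(y)|^{p-1}|v(x)-v(y)|/|x-y|^{n+sp}$; but this is integrable on $\Omega\times\Omega$ by the Hölder estimate at the end of the proof of Lemma~\ref{plap.distribucional}, so dominated convergence applies and the argument goes through uniformly in $p\in(1,\infty)$.
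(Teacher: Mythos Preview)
Your argument is correct and is essentially the same as the paper's: both differentiate the Rayleigh quotient $t\mapsto (\tfrac12[u+tv]_{s,p}^p+\sigma\|u+tv\|_{p,\phi}^p)/\|u+tv\|_p^p$ at $t=0$ and read off the weak equation. The only cosmetic difference is that the paper keeps the penalization term as a separate function $l(t)=\|u+tv\|_{p,\phi}^p$ and writes $J(t)=(j(t)+\sigma l(t))/k(t)$, whereas you fold $\sigma l(t)$ into your $j(t)$; your added remarks on dominated convergence for the case $1<p<2$ are a welcome bit of extra care that the paper leaves implicit.
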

\begin{proof}
We suppose that $u\in W^{s,p}(\Omega)$ is a normalized extremal for $\lambda_s(\sigma,\phi)$. If $v\in W^{s,p}(\Omega)$, we define the function
$$
J(t)=\frac{j(t)+\sigma l(t)}{k(t)}
$$
where $j$ and $k$ are defined in \eqref{def.j} and \eqref{def.k} respectively, and 
$$
l(t)=\|u+tv\|_{p, \phi}^p=\int_\Omega|u+tv|^p \phi\,dx.
$$
Then
\begin{equation}\label{lprima}
l'(0)=\int_\Omega p |u|^{p-2}u v \phi\, dx.
\end{equation}
Now using that $k(0)=1$, $\lambda_s(\sigma,\phi)=j(0)+\sigma l(0)$, the expressions \eqref{derj}, \eqref{derk} and \eqref{lprima} we obtain
\begin{align*}
0&=J'(0)= \frac{(j'(0)+\sigma l'(0))k(0)-(j(0)+\sigma l(0))k'(0)}{k^2(0)}\\
&=p \langle (-\Delta_{p,\Omega})^s u, v\rangle +\sigma \int_\Omega p |u|^{p-2}uv\phi\,dx-\lambda_s(\sigma,\phi)p\int_\Omega |u|^{p-2}uv\,dx,
\end{align*}
as we wanted to prove.
\end{proof}

\section{Optimal design problems}

In this section we study the optimal design problems related to the hard and the soft obstacle problems. We devote one subsection to each of these problems and finally we analyze the connection between these two optimal design problems.

Since the compactness of the inclusion $W^{s,p}(\Omega)\subset L^p(\Omega)$ will be used throughout the section, we will always assume that $\Omega\subset \R^n$ is such that this compactness is guaranteed. For instance, that $\Omega$ is a bounded open set with Lipschitz boundary.

\subsection{Optimization for the hard obstacle problem}
We begin this section showing that an extremal for $\lambda_s(A)$ has constant sign.
\begin{lemp}
Let $0<s<1<p<\ito$ be fixed. Let $\Omega\subset\R^n$ be a bounded open set and let $A\subset \Omega$ be measurable with positive measure. Then, if $u\in W^{1,p}_A(\Omega)$ is an extremal for $\lambda_s(A)$, it has constant sign, i.e. either $u\ge 0$ a.e. in $\Omega$ or $u\le 0$ a.e. in $\Omega$. 
\end{lemp}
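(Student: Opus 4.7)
The plan is to apply the standard symmetrization trick: show that $|u|$ is an admissible competitor in the variational problem defining $\lambda_s(A)$, that passing from $u$ to $|u|$ cannot increase the Gagliardo seminorm, and that the equality case of the underlying pointwise inequality forces $u$ to be of one sign.

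First I would observe that since $u=0$ a.e.\ in $A$, the same holds for $|u|$, and $\||u|\|_p=\|u\|_p$, so $|u|\in W^{s,p}_A(\Omega)$ is a valid test function in \eqref{lambdaA}. The elementary pointwise inequality
$$
\bigl||u(x)|-|u(y)|\bigr|^{p} \le |u(x)-u(y)|^{p},
$$
integrated against the Gagliardo kernel, yields $[|u|]_{s,p}^p \le [u]_{s,p}^p$. Since $u$ is itself an extremal, this forces
$$
\frac{\tfrac12[|u|]_{s,p}^p}{\||u|\|_p^p}=\lambda_s(A),
$$
so $|u|$ is also an extremal and the integrated inequality must in fact be saturated.

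The next step is to exploit the pointwise equality case: for $p>1$ one has $\bigl||a|-|b|\bigr|^{p}<|a-b|^{p}$ whenever $ab<0$. Consequently, equality of the integrals forces $u(x)u(y)\ge 0$ for a.e.\ $(x,y)\in\Omega\times\Omega$. A Fubini argument then closes the argument: if both $\{u>0\}$ and $\{u<0\}$ had positive Lebesgue measure in $\Omega$, the product set $\{u>0\}\times\{u<0\}$ would have positive measure in $\Omega\times\Omega$, contradicting $u(x)u(y)\ge 0$ a.e. Hence one of the two sets is null, i.e.\ $u\ge 0$ a.e.\ in $\Omega$ or $u\le 0$ a.e.\ in $\Omega$.

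I do not anticipate any real obstacle; the argument is elementary once the admissibility of $|u|$ is noted. The only point worth being careful about is the strict version of the equality case in $\bigl||a|-|b|\bigr|^{p}\le|a-b|^{p}$ for $p>1$, a standard calculus fact. As an alternative route, one could apply the Euler--Lagrange characterisation of extremals from the previous subsection to $|u|$ and then invoke the strong minimum principle of Theorem \ref{strongminimo} to deduce that $|u|>0$ a.e.\ in $\Omega\setminus A$; this yields sharper information but is not needed for the present lemma.
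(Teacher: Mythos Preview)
Your proposal is correct and follows essentially the same approach as the paper: both arguments rest on the elementary inequality $\bigl||a|-|b|\bigr|\le|a-b|$ with strict inequality when $ab<0$, and both conclude by observing that if $\{u>0\}$ and $\{u<0\}$ each had positive measure then the strict inequality would force $[|u|]_{s,p}^p<[u]_{s,p}^p$, contradicting extremality. The only cosmetic difference is that the paper argues directly by contradiction on the integrals, whereas you first pass to the equality case and then apply Fubini; the content is identical.
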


\begin{proof}
The lemma is a consequence of the elementary inequality
$$
||a|-|b|| \begin{cases}
= |a-b| & \text{if } ab\ge 0\\
< |a-b| & \text{if } ab<0
\end{cases}
$$

In fact, let us denote $U_+ = \{u>0\}$ and $U_-= \{u<0\}$ and assume that $|U_\pm|>0$. Then
\begin{align*}
[u]_{s,p}^p &= \int_{\Omega}\int_{\Omega} \frac{|u(x) - u(y)|^p}{|x-y|^{n+sp}}\, dx\, dy \\
&= \int_{U_+}\int_{U_-}\frac{|u(x) - u(y)|^p}{|x-y|^{n+sp}}\, dx\, dy + \iint_{(\Omega\times\Omega)\setminus (U_+\times U_-)} \frac{|u(x) - u(y)|^p}{|x-y|^{n+sp}}\, dx\, dy \\
&\ge \int_{U_+}\int_{U_-}\frac{|u(x) - u(y)|^p}{|x-y|^{n+sp}}\, dx\, dy + \iint_{(\Omega\times\Omega)\setminus (U_+\times U_-)} \frac{||u(x)| - |u(y)||^p}{|x-y|^{n+sp}}\, dx\, dy \\
&> \int_{U_+}\int_{U_-}\frac{||u(x)| - |u(y)||^p}{|x-y|^{n+sp}}\, dx\, dy + \iint_{(\Omega\times\Omega)\setminus (U_+\times U_-)} \frac{||u(x)| - |u(y)||^p}{|x-y|^{n+sp}}\, dx\, dy\\
&= \int_{\Omega}\int_{\Omega} \frac{||u(x)| - |u(y)||^p}{|x-y|^{n+sp}}\, dx\, dy \\
&= [|u|]_{s,p}^p.
\end{align*}
Therefore, $u$ is not an extremal for $\lambda_s(A)$ which is a contradiction and the lemma is proved. 
\end{proof}

Before beginning with the proof of the existence of an optimal configuration we need a characterization of the constant $\Lambda_s(\alpha)$ given in \eqref{hard.opt}.
\begin{lemp}\label{caractLambda}
Let $\alpha$ be a number in $(0,1)$. Then
\begin{equation}\label{cteLambda}
\Lambda_s(\alpha)=\inf \left \{\frac{\frac12[u]_{s,p}^p}{\|u\|_{L^p(\Omega)}^p}\colon u\in W^{s,p}(\Omega), |\{u=0\}\cap\Omega|\geq \alpha |\Omega|\right \}.
\end{equation}
\end{lemp}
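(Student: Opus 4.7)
The plan is to prove the identity by a double inequality, showing each side is bounded by the other.

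\textbf{The $\le$ direction.} For any $A\in\mathcal{A}_\alpha$, every $v\in W^{s,p}_A(\Omega)$ satisfies $v=0$ a.e.\ on $A$, so $|\{v=0\}\cap\Omega|\ge |A|=\alpha|\Omega|$. Thus every $v$ admissible for $\lambda_s(A)$ is admissible for the right-hand side of \eqref{cteLambda}, and so the RHS is $\le \lambda_s(A)$ for every $A\in\mathcal{A}_\alpha$. Taking infimum over $A$ gives that the RHS is $\le \Lambda_s(\alpha)$.

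\textbf{The $\ge$ direction.} Let $u\in W^{s,p}(\Omega)$ with $Z:=\{u=0\}\cap\Omega$ satisfying $|Z|\ge \alpha|\Omega|$. The key observation is that $Z$ contains a measurable subset of measure exactly $\alpha|\Omega|$: for instance, define $f(r):=|Z\cap B_r(0)|$, which is continuous and nondecreasing in $r$, with $f(0)=0$ and $f(r)\to |Z|\ge \alpha|\Omega|$ as $r\to\infty$; by the intermediate value theorem there is $r_0$ with $f(r_0)=\alpha|\Omega|$, and we set $A:=Z\cap B_{r_0}(0)\in \mathcal{A}_\alpha$. Since $u=0$ on $A$, we have $u\in W^{s,p}_A(\Omega)$, so
$$
\Lambda_s(\alpha)\le \lambda_s(A)\le \frac{\tfrac12[u]_{s,p}^p}{\|u\|_p^p}.
$$
Taking the infimum over all such $u$ yields $\Lambda_s(\alpha)\le $ RHS.

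\textbf{Main obstacle.} The proof is essentially trivial once one notices the subset-selection trick in the $\ge$ direction; the only slightly delicate point is justifying the existence of a measurable subset of $Z$ of exactly measure $\alpha|\Omega|$. This is a standard fact (it follows either from the intermediate value argument sketched above or from the nonatomicity of Lebesgue measure), so no serious difficulty arises. The statement is really just a bookkeeping lemma rewriting the two-level infimum (first over $v$ vanishing on $A$, then over $A$) as a single infimum over functions with a large zero set.
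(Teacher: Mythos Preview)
Your proof is correct and follows essentially the same double-inequality approach as the paper, using the same subset-selection trick in the key direction. The only minor difference is that you work directly with arbitrary admissible test functions in both directions, whereas the paper invokes the extremal for $\lambda_s(A)$ in one direction and a minimizing sequence for $\tilde\Lambda_s(\alpha)$ in the other; your formulation is slightly cleaner since it avoids appealing to the existence of extremals.
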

\begin{proof}
We define 
$$
\tilde{\Lambda}_s(\alpha):=\inf\left \{\frac{\frac12[u]_{s,p}^p}{\|u\|_{L^p(\Omega)}^p}\colon u\in W^{s,p}(\Omega), |\{u=0\}\cap\Omega|\geq \alpha |\Omega|\right \}.
$$
Let $A\subset \Omega$ be an arbitrary subset such that $|A|=\alpha|\Omega|$. By testing the quotient defining $\tilde{\Lambda}$ with the optimal function for $\lambda_s(A)$ we get that
$$
\tilde{\Lambda}_s(\alpha)\leq\lambda_s(A).
$$
Taking infimum in $A$ in the above inequality we obtain
\begin{equation}\label{des1} 
\tilde{\Lambda}_s(\alpha)\leq \Lambda_s(\alpha).
\end{equation}
On the other hand let $\{v_n\}_{n\in \N}$ be a normalized minimizing sequence for the constant $\tilde{\Lambda}_s(\alpha)$, i.e. $v_n\in W^{s,p}(\Omega)$, $\|v_n\|_{L^p(\Omega)}=1$,
$$
\tilde{\Lambda}_s(\alpha)=\lim_{n\to \ito} \frac12[v_n]_{s,p}^p,\quad |\{v_n=0\}\cap\Omega|\geq 0.
$$
Now for each $n\geq 1$, we take $A_n\subset \{v_n=0\}\cap \Omega$ such that $ |A_n|=\alpha |\Omega|$.

Thus 
$$
\Lambda_s(\alpha)\leq \lambda_s(A_n)\leq \frac12[v_n]_{s,p}^p,\quad \forall n\in \N.
$$
Taking the limit as $n\to\infty$ we obtain
\begin{equation}\label{des2}
\Lambda_s(\alpha)\leq\tilde{\Lambda}_s(\alpha).
\end{equation}
In this way \eqref{des1} and \eqref{des2} prove the lemma.
\end{proof}

Now we prove the existence of an optimal configuration for the problem \eqref{hard.opt}. 
\begin{thm}\label{existenciaoptima}
Let $\alpha$ be an arbitrary number in $(0,1)$. Then there exist:
\begin{enumerate}
\item A set $A\subset \Omega$, such that $|A|=\alpha|\Omega|$ and 
$$
\Lambda_s(\alpha)=\lambda_s(A).
$$
\item A function $u\in W^{s,p}(\Omega)$ with $|\{u=0\}\cap \Omega|\geq \alpha |\Omega|$, such that
$$
\Lambda_s(\alpha)=\frac{\frac12[u]_{s,p}^p}{\|u\|_{L^p(\Omega)}}.
$$
\end{enumerate}
\end{thm}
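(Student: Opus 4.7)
The natural strategy is to apply the direct method of the calculus of variations to the characterization \eqref{cteLambda} of $\Lambda_s(\alpha)$ provided by Lemma \ref{caractLambda}, and then to recover an optimal obstacle from the zero set of the resulting extremal. Part (2) is solved first, and part (1) is then an immediate consequence.

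To produce the extremal in part (2), let $\{v_n\}_{n\in\N}\subset W^{s,p}(\Omega)$ be a normalized minimizing sequence for the right hand side of \eqref{cteLambda}, so that $\|v_n\|_p=1$, $|\{v_n=0\}\cap\Omega|\geq\alpha|\Omega|$, and $\tfrac12[v_n]_{s,p}^p\to\Lambda_s(\alpha)$. Since this sequence is bounded in $W^{s,p}(\Omega)$, reflexivity together with the compact embedding $W^{s,p}(\Omega)\hookrightarrow L^p(\Omega)$ (which uses the Lipschitz regularity of $\partial\Omega$) delivers a subsequence and a function $u\in W^{s,p}(\Omega)$ with $v_n\rightharpoonup u$ weakly in $W^{s,p}(\Omega)$, $v_n\to u$ strongly in $L^p(\Omega)$ and, extracting further if needed, $v_n\to u$ almost everywhere in $\Omega$. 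Strong $L^p$ convergence forces $\|u\|_p=1$, and the weak lower semicontinuity of the Gagliardo seminorm (as used in the proof of Theorem \ref{existe.extremal}) gives $\tfrac12[u]_{s,p}^p\le\Lambda_s(\alpha)$.

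The principal obstacle is to show that the obstacle constraint survives passage to the limit, i.e.\ that $|\{u=0\}\cap\Omega|\ge\alpha|\Omega|$. The point is that weak $W^{s,p}$ convergence does not by itself control where the limit vanishes, so the argument must go through almost everywhere convergence and Fatou's lemma. At any $x\in\Omega$ with $u(x)\neq 0$, a.e.\ convergence forces $v_n(x)\neq 0$ for all sufficiently large $n$, hence
\begin{equation*}
\chi_{\{u\neq 0\}}(x)\le \liminf_{n\to\ito}\chi_{\{v_n\neq 0\}}(x)\quad\text{for a.e. }x\in\Omega.
\end{equation*}
Integrating over $\Omega$ and applying Fatou's lemma gives $|\{u\neq 0\}\cap\Omega|\le\liminf_{n}|\{v_n\neq 0\}\cap\Omega|\le(1-\alpha)|\Omega|$, so $|\{u=0\}\cap\Omega|\ge\alpha|\Omega|$. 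Combined with the semicontinuity bound, equality $\tfrac12[u]_{s,p}^p=\Lambda_s(\alpha)$ follows, proving (2).

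For part (1), I would use inner regularity of Lebesgue measure to select a measurable $A\subset\{u=0\}\cap\Omega$ with $|A|=\alpha|\Omega|$ exactly. Since $u$ then belongs to $W^{s,p}_A(\Omega)$, it is admissible in the quotient defining $\lambda_s(A)$, so
\begin{equation*}
\lambda_s(A)\le\frac{\tfrac12[u]_{s,p}^p}{\|u\|_p^p}=\Lambda_s(\alpha)\le\lambda_s(A),
\end{equation*}
the last inequality being the definition of $\Lambda_s(\alpha)$ as an infimum over $\mathcal{A}_\alpha$. Equality throughout yields $\Lambda_s(\alpha)=\lambda_s(A)$ and completes the argument.
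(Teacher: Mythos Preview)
Your proof is correct and follows essentially the same route as the paper: the direct method on the characterization from Lemma~\ref{caractLambda}, with the zero-set constraint passed to the limit via a.e.\ convergence (the paper phrases this as ``upper semicontinuity of the measure of level sets,'' which is exactly your Fatou argument on $\chi_{\{v_n\neq 0\}}$), and then part~(1) recovered by selecting $A\subset\{u=0\}\cap\Omega$ of the right measure.
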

\begin{proof}
Clearly (1) follows immediately from (2). It suffices to take any set $A\subset \{u=0\}\cap\Omega$ such that $|A|=\alpha |\Omega|$.

Therefore, we only need to prove (2). Let $\{v_n\}_{n\in \N}$ be a normalized minimizing sequence for the constant $\Lambda_s(\alpha)$, i.e for each $n\in \N$.
$$
v_n\in W^{s,p}(\Omega),\quad  \|v_n\|_{L^p(\Omega)}=1,\quad  |\{ v_n=0\}\cap \Omega|\geq \alpha |\Omega|,\quad 
$$
and 
$$
\Lambda_s(\alpha)=\lim_{n\to \ito}\frac12[v_n]_{s,p}^p.
$$
In this way the sequence $\{v_n\}_{n\in \N}$ is a bounded sequence in the space $W^{s,p}(\Omega)$. Therefore by the reflexivity of the space $W^{s,p}(\Omega)$ and the compactness of the immersion $W^{s,p}(\Omega)\subset L^p(\Omega)$ (see \cite{DiNezza-Palatucci-Valdinoci}) there exists a function $u\in W^{s,p}(\Omega)$ and a subsequence, that we still denote by $\{v_n\}_{n\in\N}$, such that
\begin{align}
&v_n\rightharpoonup u \text{ weakly in } W^{s,p}(\Omega)\label{debilWsp}\\
&v_n \to u\text{ strongly in }L^p(\Omega)\label{fuerteLp}\\
&v_n\to u\text{ a.e. in }\Omega\label{ctp}.
\end{align}
By \eqref{fuerteLp} we can conclude that
$$
1=\lim_{n\to \ito}\|v_n\|_{L^p(\Omega)}=\|u\|_{L^p(\Omega)}.
$$ 
And by \eqref{ctp} and the upper semicontinuity of the measure of level sets, we obtain that
$$
|\{u=0\}\cap \Omega|\geq \lim_{n\to \ito}|\{v_n=0\}\cap\Omega|\geq \alpha |\Omega|.
$$
In this way the function $v$ is an admissible function for the constant $\Lambda_s(\alpha)$, then 
\begin{equation}\label{desSalpha}
\Lambda_s(\alpha)\leq \frac12[u]_{s,p}^p.
\end{equation}
Now using \eqref{debilWsp} and the lower semicontinuity of the seminorm $[\,\cdot\,]_{s,p}$ we get
$$
[u]_{s,p}\leq \liminf_{n\to\ito}[v_n]_{s,p}^p = 2\Lambda_s(\alpha).
$$
The above inequality and \eqref{desSalpha} tell us that the function $u$ satisfies (2).
\end{proof}

Theorem \ref{existenciaoptima} is not completely satisfactory. What one actually wants is that the optimal set $A_s$ coincides with the set where the extremal vanishes. Than is $|\{u=0\}\cap\Omega|=\alpha |\Omega|$ for any extremal $u\in W^{s,p}(\Omega)$. 

This fact will follows from the strong minimum principle for the regional $(s,p)-$la-placian proved in Theorem \ref{strongminimo}.
\begin{thm}\label{medida=}
If $u\in W^{s,p}(\Omega)$ is an extremal for $\Lambda_s(\alpha)$, then
$$
|\{u=0\}\cap\Omega|=\alpha|\Omega|.
$$
\end{thm}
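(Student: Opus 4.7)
The plan is to argue by contradiction and reduce the situation to one where the strong minimum principle of Theorem~\ref{strongminimo} applies. By Theorem~\ref{existenciaoptima} we already know $|\{u=0\}\cap\Omega|\ge\alpha|\Omega|$, and by the sign lemma proved just above we may assume $u\ge 0$ a.e. in $\Omega$, normalized so $\|u\|_p=1$. Suppose for contradiction that
\[
|\{u=0\}\cap\Omega|>\alpha|\Omega|.
\]
By inner regularity of Lebesgue measure (applied to a Borel representative of $u$), choose a compact set $K\subset\{u=0\}\cap\Omega$ satisfying
\[
\alpha|\Omega|<|K|<|\{u=0\}\cap\Omega|.
\]
The second strict inequality, which guarantees that $(\{u=0\}\cap\Omega)\setminus K$ still has positive measure, is the crucial point exploited at the end.

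Next I would show that $u$ is itself an extremal for $\lambda_s(K)$. Since $u=0$ on $K$, we have $u\in W^{s,p}_K(\Omega)$. Pick any $K'\subset K$ with $|K'|=\alpha|\Omega|$; the inclusion $W^{s,p}_K(\Omega)\subset W^{s,p}_{K'}(\Omega)$ gives $\lambda_s(K')\le\lambda_s(K)$, and the definition \eqref{hard.opt} of $\Lambda_s(\alpha)$ gives $\Lambda_s(\alpha)\le\lambda_s(K')$. Combining with the admissibility of $u$ for $\lambda_s(K)$,
\[
\Lambda_s(\alpha)\le\lambda_s(K')\le\lambda_s(K)\le\frac{\tfrac12[u]_{s,p}^p}{\|u\|_p^p}=\Lambda_s(\alpha),
\]
so $\lambda_s(K)=\Lambda_s(\alpha)$ and $u$ attains this infimum.

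By the Euler--Lagrange characterization of extremals of $\lambda_s(A)$ proved earlier (equation \eqref{eq.EL.A}), $u$ is then a nonnegative, nontrivial weak solution in the sense of Definition~\ref{de.mixto} of
\[
\begin{cases}
(-\Delta_{p,\Omega})^s u = \lambda_s(K)\,u^{p-1} & \text{in }\Omega\setminus K,\\
u=0 & \text{in }K,
\end{cases}
\]
with nonnegative right-hand side. Since $K$ is compact (hence closed) and $u\not\equiv 0$, the strong minimum principle of Theorem~\ref{strongminimo} forces $u>0$ almost everywhere on $\Omega\setminus K$. This contradicts the construction, since $(\{u=0\}\cap\Omega)\setminus K\subset\Omega\setminus K$ has positive measure. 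The contradiction yields $|\{u=0\}\cap\Omega|=\alpha|\Omega|$.

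The main obstacle is the passage from the merely measurable set $\{u=0\}\cap\Omega$ to a \emph{closed} obstacle of the prescribed size demanded by Theorem~\ref{strongminimo}. Inner regularity of Lebesgue measure handles this, but one must carefully impose the double-sided bound $\alpha|\Omega|<|K|<|\{u=0\}\cap\Omega|$: the lower bound ensures $K$ contains an admissible competitor $K'\in\mathcal A_\alpha$ so that $\lambda_s(K)=\Lambda_s(\alpha)$, while the strict upper bound is what keeps $(\{u=0\}\cap\Omega)\setminus K$ of positive measure and thus makes the final application of the strong minimum principle genuinely contradictory.
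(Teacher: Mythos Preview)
Your proof is correct and follows essentially the same route as the paper's: assume by contradiction that the zero set is too large, use inner regularity to extract a closed set $K$ with $\alpha|\Omega|<|K|<|\{u=0\}\cap\Omega|$, show that $u$ is an extremal for $\lambda_s(K)$ (you do this via an auxiliary $K'\in\mathcal A_\alpha$ while the paper invokes the characterization of Lemma~\ref{caractLambda}, but these are the same monotonicity observation), and then apply the strong minimum principle to reach a contradiction. The only cosmetic differences are your use of a compact rather than merely closed $K$ and the explicit intermediate set $K'$, neither of which changes the argument.
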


\begin{proof}
We just have to prove that $|\{u=0\}\cap\Omega|\leq\alpha |\Omega|$. We assume for the rest of the proof that the extremal $u$ is normalized, so $\|u\|_{L^p(\Omega)}=1$ and that $u$ is nonnegative in $\Omega$.

Suppose by contradiction that $|\{u=0\}\cap\Omega|>\alpha |\Omega|$, then by the definition of the Lebesgue measure there exists a closed set $A\subset \{u=0\}\cap\Omega$ such that 
$$
|\{u=0\}\cap\Omega|>|A|>\alpha |\Omega|.
$$
Using the characterization of the constant $\Lambda_s(\alpha)$ we get
\begin{equation}
\Lambda_s(\alpha) = \frac12 [u]^p_{s,p} \le \lambda_s(A)\le \frac12 [u]_{s,p}^p = \Lambda_s(\alpha),
\end{equation}
where we have used the fact that $u$ is admissible in the characterization of $\lambda_s(A)$.

Then $u$ is an extremal for $\lambda_s(A)$ and therefore is a nonnegative solution for the problem 
\begin{equation}\label{formadebil}
\begin{cases}
(-\Delta_{p,\Omega}^s)u=\lambda |u|^{p-2}u,&\Omega\setminus A\\
u=0,&A,
\end{cases}
\end{equation}
in the sense of Definition \ref{de.mixto}.

Using Theorem \ref{strongminimo} we conclude that $u>0$ a.e. in $\Omega\setminus A$, but this is a contradiction since $|(\{u=0\}\cap\Omega)\setminus A|>0$. 

This completes the proof of the theorem.
\end{proof} 

\subsection{Optimization for the soft obstacle problem}

The problem that we consider now is to minimize the constant $\lambda_s(\sigma,\phi)$ on the class of bounded potential functions $\phi$, i.e. 
$$
\mathcal{B}=\{\phi\in L^{\ito}(\Omega)\colon 0\leq \phi\leq 1\}.
$$
This problem is trivial since in this case the infimum is given by $\phi=0$. So in order to have a nontrivial problem we consider, for $0<\alpha<1$, the functions $\phi\in\mathcal{A}$ with prescribed $L^1-$norm. So the problem to consider is
\begin{equation}\label{softobstacle}
\Lambda_s(\sigma,\alpha)=\inf\{\lambda_s(\sigma,\phi)\colon \phi\in\mathcal{B}_\alpha\},
\end{equation}
where 
$$
\mathcal{B}_\alpha=\left\{\phi\in \mathcal{B}\colon  \int_\Omega \phi\,dx=\alpha|\Omega|\right\}.
$$

Throughout this section, given $\phi\in \mathcal{B}_\alpha$ and $\sigma>0$, we use the notation
$$
I_{s, \phi,\sigma}(v) = \frac12 [v]_{s,p}^p + \sigma\int_\Omega |v|^p\phi\, dx,
$$
for every $v\in W^{s,p}(\Omega)$.

For this problem, a potential function $\phi\in \mathcal{B}_\alpha$ that realizes the infimum \eqref{softobstacle} is called an optimal potential and if $u$ is an eigenfunction for  $\lambda_s(\sigma, \phi)$ with $\phi$ an optimal potential, the pair $(u,\phi)$ is called an optimal pair for this problem.

We show the existence of an optimal potential for the problem \eqref{softobstacle} and, moreover, we show that every optimal potential is a characteristic function.

The following theorem gives us the existence of an optimal configuration.
\begin{thm}\label{softoptimal}
Let $0<s<1<p<\ito$, $\sigma>0$ and $0<\alpha<1$ be fixed. Let $\Omega\subset \R^n$ be a bounded open set with Lipschitz boundary. Then there exists an optimal pair $(u,\phi)\in W^{s,p}(\Omega)\times \mathcal{B}_\alpha$. Moreover one can take $\phi=\chi_D$, with
$$
\{u<t\}\subset D\subset \{u\leq t\},
$$
for some $t>0$.
\end{thm}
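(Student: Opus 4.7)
I would run the direct method twice: first over potentials, then over test functions. Take a minimizing sequence $\{\phi_n\}\subset \mathcal{B}_\alpha$ for $\Lambda_s(\sigma,\alpha)$ and for each $n$ pick a normalized extremal $u_n\in W^{s,p}(\Omega)$ for $\lambda_s(\sigma,\phi_n)$, which exists by Theorem \ref{existe.extremal.phi}. Since $\|\phi_n\|_\infty\le 1$, there is a subsequence with $\phi_n\rightharpoonup^*\phi$ in $L^\infty(\Omega)$; the pointwise bounds $0\le\phi\le 1$ and the mass $\int_\Omega\phi\,dx=\alpha|\Omega|$ are preserved, so $\phi\in\mathcal{B}_\alpha$. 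Because $\lambda_s(\sigma,\phi_n)\to\Lambda_s(\sigma,\alpha)<\infty$, the sequence $\{u_n\}$ is bounded in $W^{s,p}(\Omega)$. Using reflexivity and the compact embedding $W^{s,p}(\Omega)\hookrightarrow L^p(\Omega)$ (valid by the Lipschitz assumption on $\partial\Omega$), extract a further subsequence with $u_n\rightharpoonup u$ in $W^{s,p}(\Omega)$, $u_n\to u$ in $L^p(\Omega)$ and a.e., so $\|u\|_p=1$.

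\textbf{Step 2 (passage to the limit).} The decisive fact is
\[
\int_\Omega|u_n|^p\phi_n\,dx\ \longrightarrow\ \int_\Omega|u|^p\phi\,dx,
\]
which follows from $|u_n|^p\to|u|^p$ strongly in $L^1(\Omega)$ combined with $\phi_n\rightharpoonup^*\phi$ in $L^\infty(\Omega)$ (a bounded-in-$L^\infty$ sequence converging weakly-$*$ against a strongly convergent $L^1$ sequence). Together with the weak lower semicontinuity of $[\,\cdot\,]_{s,p}$, this yields
\[
\lambda_s(\sigma,\phi)\le I_{s,\phi,\sigma}(u)\le \liminf_{n\to\infty}I_{s,\phi_n,\sigma}(u_n)=\lim_{n\to\infty}\lambda_s(\sigma,\phi_n)=\Lambda_s(\sigma,\alpha).
\]
The reverse inequality is just the definition of $\Lambda_s(\sigma,\alpha)$, hence $(u,\phi)$ is an optimal pair and $u$ is an extremal for $\lambda_s(\sigma,\phi)$.

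\textbf{Step 3 (bathtub principle).} The argument proving constant sign in the hard-obstacle subsection carries over verbatim (it uses only the strict inequality $||a|-|b||<|a-b|$ when $ab<0$), so we may assume $u\ge 0$. A strong minimum principle for the weighted equation \eqref{eq.EL.phi}, obtained by adapting the logarithmic lemma (Lemma \ref{logaritmic}) to the equation with the extra bounded term $\sigma\phi\,u^{p-1}$ (just as in the proof of Theorem \ref{strongminimo}), gives $u>0$ a.e. in $\Omega$. Now fix this $u$ and use optimality of $\phi$: for every $\tilde\phi\in\mathcal{B}_\alpha$,
\[
I_{s,\phi,\sigma}(u)=\lambda_s(\sigma,\phi)\le \lambda_s(\sigma,\tilde\phi)\le I_{s,\tilde\phi,\sigma}(u),
\]
which, after cancelling the common Gagliardo term, reduces to
\[
\int_\Omega u^p\,\phi\,dx\ \le\ \int_\Omega u^p\,\tilde\phi\,dx\qquad \text{for every }\tilde\phi\in\mathcal{B}_\alpha.
\]
This is the classical bathtub problem with weight $g=u^p$. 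Define
\[
t:=\inf\bigl\{s\ge 0:\ |\{u\le s\}\cap\Omega|\ge\alpha|\Omega|\bigr\}.
\]
Since $u>0$ a.e. and $\alpha>0$, one has $t>0$. The bathtub principle forces any minimizer $\phi$ to satisfy $\phi=1$ a.e.\ on $\{u<t\}$ and $\phi=0$ a.e.\ on $\{u>t\}$; if $|\{u=t\}|=0$ this already gives $\phi=\chi_{\{u\le t\}}$, while on the plateau $\{u=t\}$ the functional $\int u^p\phi$ is insensitive to the choice of $\phi$, so we may redefine $\phi$ there to be the characteristic function of a measurable subset $D_0\subset\{u=t\}$ with $|D_0|=\alpha|\Omega|-|\{u<t\}|$. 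Setting $D=\{u<t\}\cup D_0$ produces an optimal pair $(u,\chi_D)$ with $\{u<t\}\subset D\subset\{u\le t\}$, as required.

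\textbf{Expected difficulty.} The two compactness and semicontinuity arguments in Steps 1--2 are routine once one spots the right mixed weak/strong convergence for the penalization term. The real work is in Step 3: verifying that a minimizer of the linear functional $\psi\mapsto\int u^p\psi$ on $\mathcal{B}_\alpha$ must be of bathtub form (a short rearrangement/exchange argument) and, more delicately, justifying $t>0$ via a strong minimum principle for the soft-obstacle equation \eqref{eq.EL.phi}, which is not directly covered by Theorem \ref{strongminimo} because the right-hand side $(\lambda_s(\sigma,\phi)-\sigma\phi)u^{p-1}$ need not be sign-definite; one handles this by applying the logarithmic lemma separately on compact subsets of $\Omega$ and exploiting the $L^\infty$ bound on $\sigma\phi$.
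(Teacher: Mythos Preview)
Your argument is correct and follows essentially the same route as the paper: a direct-method argument on the pair $(u_n,\phi_n)$ using weak-$*$ compactness in $L^\infty$, compact embedding $W^{s,p}\hookrightarrow L^p$, the key product-limit $\int|u_n|^p\phi_n\to\int|u|^p\phi$, and then the bathtub principle to replace $\phi$ by a characteristic function.

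The one place you go beyond the paper is your careful justification of $t>0$ via a strong minimum principle for the weighted equation \eqref{eq.EL.phi}; the paper simply cites the bathtub principle from Lieb--Loss and asserts $t>0$ without addressing why the extremal $u$ cannot vanish on a set of measure at least $\alpha|\Omega|$. Your observation that this requires adapting the logarithmic lemma to handle the (possibly sign-changing) right-hand side $(\lambda_s(\sigma,\phi)-\sigma\phi)u^{p-1}$ is well taken, and your proposed fix---absorbing the bounded lower-order term into the estimate---is the standard way to do it.
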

\begin{proof}
Let $\{\phi_k\}_{k\in\N}$ be a minimizing sequence in $\mathcal{B}_\alpha$ for $\Lambda_s(\sigma,\alpha)$ i.e.
$$
\Lambda_s(\sigma,\alpha)=\lim_{k\to\ito}\lambda_s(\sigma,\phi_k).
$$

The sequence $\{\phi_k\}_{k\in\N}$ is a bounded sequence in $L^\ito(\Omega)$, then using the fact that $L^1(\Omega)$ is a separable Banach space and $L^\ito(\Omega)$ is its dual space, there exists $\phi\in L^\ito (\Omega)$ and a subsequence (that still we call $\{\phi_k\}_{k\in\N}$) such that
\begin{align}\label{debilestrellaphik}
\phi_k \stackrel{\ast}{\rightharpoonup} \phi \text{ in } L^\ito(\Omega).
\end{align}
Observe that \eqref{debilestrellaphik} implies that $\phi\in \mathcal{B}_\alpha$.

Now let $\{u_k\}_{k\in\N}\in W^{s,p}(\Omega)$ be the corresponding sequence of normalized eigenfunctions for the constants $\lambda_s(\sigma,\phi_k)$. That is $\lambda_s(\sigma,\phi_k)=I_{s,\phi_k,\sigma}(u_k)$. 

Since $[u_k]_{s,p}^p\le 2 I_{s,\phi_k,\sigma}(u_k)$ and $\|u_k\|_p=1$, then the extremals $\{u_k\}_{k\in\N}$ are uniformly bounded in $W^{s,p}(\Omega)$. By the reflexivity of the space $W^{s,p}(\Omega)$ (see \cite[p. 205]{Adams}) and the compactness of the embedding $W^{s,p}(\Omega)\subset L^p(\Omega)$ there exists a subsequence (that we still call $\{u_k\}_{k\in\N}$) and a function $u\in W^{s,p}(\Omega)$ such that
\begin{align}
& u_k \rightharpoonup u \text{ weakly in } W^{s,p}(\Omega)\\
& u_k\to u \text{ strongly in }L^p(\Omega)\label{Lpsoft} \\
& u_k\to u \text{ a.e. in } \Omega. \label{ctpsoft}
\end{align}
Using \eqref{Lpsoft} we get
$$
1=\lim_{k\to\ito}\|u_k\|_{L^p(\Omega)}=\|u\|_{L^p(\Omega)},
$$
thus $u$ is an admissible function in the definition of the constant $\lambda_s(\sigma,\phi)$, in consequence
\begin{equation}\label{admisiblesoft}
\lambda_s(\sigma,\phi)\leq I_{s,\phi,\sigma}(u).
\end{equation}
Now using \eqref{Lpsoft}, it follows that $|u_k|^p\to |u|^p$ strongly in $L^1(\Omega)$ (this follows directly from H\"older's inequality together with the inequality $|a^p-b^p|\le p |a^{p-1}-b^{p-1}| |a-b|$ for any $a,b\ge 0$ and $p\ge 1$) and therefore, by \eqref{debilestrellaphik} we obtain
$$
\int_\Omega |u_k|^p\phi_k \, dx \to \int_\Omega |u|^p\phi \, dx \text{ as } k\to\infty.
$$
Finally, by the lower semicontinuity of the Gagliardo seminorm $[\,\cdot\,]_{s,p}$ we get
\begin{equation}\label{liminfI}
I_{s,\phi,\sigma}(u)\leq \liminf_{k\to\ito}I_{s,\phi_k,\sigma}(u_k)
\end{equation}

Using first \eqref{liminfI} and then \eqref{admisiblesoft}, we obtain
$$
I_{s,\phi,\sigma}(u)\leq \liminf_{k\to\ito}I_{s,\phi_k,\sigma}(u_k)=\Lambda_s(\sigma,\alpha)\leq \lambda_s(\sigma,\phi)\leq I_{s,\phi,\sigma}(u).
$$
Then $\Lambda_s(\sigma,\alpha)=\lambda_s(\sigma,\phi)=I_{s,\phi,\sigma}(u)$. This shows that $(u,\phi)\in W^{s,p}(\Omega)\times \mathcal{B}_\alpha$ is an optimal pair.

Finally using  the {\em Bathtub principle} (see \cite[Theorem 1.14, p. 28]{Lieb-Loss}) we conclude that the problem
$$
\inf \left \{\int_\Omega |u|^p\phi\,dx\colon  \phi\in \mathcal{B}_\alpha\right \}
$$
has a solution of the form $\phi=\chi_D$ with $\{u<t\}\subset D\subset \{u\leq t\}$ for some $t>0$.
\end{proof}

\begin{rk}
Again by a direct application of the Bathtub principle, the uniqueness of the optimal potential is equivalent to the fact that $|\{u=t\}|=0$.
\end{rk}

\subsection{Connection between soft and hard problems}
In this subsection we make rigorous the fact that the soft obstacle problem is a penalized version of the hard obstacle problem and that the soft obstacle problem converges to the hard obstacle one when the penalization term goes to $+\ito$.

In fact, what we prove here is that $\Lambda_s(\sigma,\alpha)\to \Lambda_s(\alpha)$ when $\sigma \to \ito$, establishing a connection between the optimal problems \eqref{hard.opt} and \eqref{soft.opt}.

\begin{thm}\label{kappaainfinito}
Let $0<s<1<p<\ito$ and let $0<\alpha<1$ be fixed. Let $\Omega\subset \R^n$ be a bounded open set with Lipschitz boundary and let $\Lambda_s(\alpha)$ and $\Lambda_s(\sigma, \alpha)$ be the constants defined in \eqref{hard.opt} and \eqref{soft.opt} respectively. Then
$$
\lim_{\sigma\to\ito} \Lambda_s(\sigma,\alpha) = \Lambda_s(\alpha).
$$

Moreover, if we denote by $(u_\sigma,\chi_{D_\sigma})\in  W^{s,p}(\Omega)\times \mathcal{B}_\alpha$ an optimal pair for $\Lambda_s(\sigma, \alpha)$, then the family $\{(u_\sigma, \chi_{D_\sigma})\}_{\sigma>0}$ is precompact in the strong topology of $W^{s,p}(\Omega)$ in the first variable and the weak* topology of $L^\ito(\Omega)$ in the second variable.

Finally, any accumulation point of the family has the form $(u,\chi_D)$ where $D\in \mathcal{A}_\alpha$, $\{u=0\}\cap\Omega=D$ and $u$ is an extremal for $\Lambda_s(\alpha)$.
\end{thm}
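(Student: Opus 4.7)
The plan is to first establish the one–sided inequality $\Lambda_s(\sigma,\alpha)\le \Lambda_s(\alpha)$ for every $\sigma>0$, and then extract a limit from optimal pairs and identify it with an extremal for the hard problem via the characterization \eqref{cteLambda} and Theorem \ref{medida=}.

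First I would observe that $\Lambda_s(\sigma,\alpha)$ is monotone nondecreasing in $\sigma$, so $L:=\lim_{\sigma\to\infty}\Lambda_s(\sigma,\alpha)$ exists. For the upper bound, given any $A\in \mathcal{A}_\alpha$ and a normalized extremal $v\in W^{s,p}_A(\Omega)$ for $\lambda_s(A)$, the potential $\chi_A\in \mathcal{B}_\alpha$ and $\int_\Omega |v|^p\chi_A\,dx=0$, so testing $\lambda_s(\sigma,\chi_A)$ with $v$ yields $\lambda_s(\sigma,\chi_A)\le \frac12[v]_{s,p}^p=\lambda_s(A)$. Taking infimum over $A$ gives $\Lambda_s(\sigma,\alpha)\le \Lambda_s(\alpha)$ and hence $L\le \Lambda_s(\alpha)$.

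Next I would extract accumulation points. Pick normalized optimal pairs $(u_\sigma,\chi_{D_\sigma})$ with $\|u_\sigma\|_p=1$. Since $\frac12[u_\sigma]_{s,p}^p\le I_{s,\chi_{D_\sigma},\sigma}(u_\sigma)=\Lambda_s(\sigma,\alpha)\le \Lambda_s(\alpha)$, the sequence $\{u_\sigma\}$ is bounded in $W^{s,p}(\Omega)$; by reflexivity and the compactness of $W^{s,p}(\Omega)\hookrightarrow L^p(\Omega)$, along a subsequence $u_\sigma\rightharpoonup u$ weakly in $W^{s,p}$, strongly in $L^p$ and a.e., while $\chi_{D_\sigma}\stackrel{*}{\rightharpoonup} \phi$ in $L^\infty(\Omega)$ with $\phi\in \mathcal{B}_\alpha$. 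From $\sigma\int_\Omega |u_\sigma|^p\chi_{D_\sigma}\,dx\le \Lambda_s(\alpha)$ we get $\int_\Omega |u_\sigma|^p\chi_{D_\sigma}\,dx\to 0$. Since $|u_\sigma|^p\to |u|^p$ strongly in $L^1$ (as in the proof of Theorem \ref{softoptimal}) and $\chi_{D_\sigma}\stackrel{*}{\rightharpoonup}\phi$, the product passes to the limit and gives $\int_\Omega |u|^p\phi\,dx=0$, so $u=0$ a.e. on $\{\phi>0\}$. Combining $0\le \phi\le 1$ with $\int\phi=\alpha|\Omega|$ yields $|\{\phi>0\}|\ge \alpha|\Omega|$, hence $|\{u=0\}\cap\Omega|\ge \alpha|\Omega|$, so $u$ is admissible in the characterization \eqref{cteLambda}. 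Then
\begin{equation*}
\Lambda_s(\alpha)\le \tfrac12[u]_{s,p}^p\le \liminf_{\sigma\to\infty}\tfrac12[u_\sigma]_{s,p}^p\le \liminf_{\sigma\to\infty}\Lambda_s(\sigma,\alpha)=L\le \Lambda_s(\alpha),
\end{equation*}
so equality holds throughout, $L=\Lambda_s(\alpha)$, and $u$ is an extremal for $\Lambda_s(\alpha)$.

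Finally I would upgrade weak to strong convergence and identify the limit potential. From the chain of equalities above, $[u_\sigma]_{s,p}\to [u]_{s,p}$ and $\|u_\sigma\|_p\to \|u\|_p=1$; combined with weak convergence in the uniformly convex space $W^{s,p}(\Omega)$, this gives $u_\sigma\to u$ strongly in $W^{s,p}(\Omega)$. For the potential, Theorem \ref{medida=} applied to $u$ yields $|\{u=0\}\cap\Omega|=\alpha|\Omega|$. Setting $A:=\{u=0\}\cap\Omega$, the inclusion $\{\phi>0\}\subset A$ (up to null sets), together with $|\{\phi>0\}|\ge \alpha|\Omega|=|A|$, forces $\{\phi>0\}=A$ a.e. Since $\phi\le 1$ and $\int_A\phi=\alpha|\Omega|=|A|$, we conclude $\phi=\chi_A$ a.e. Because weak* convergence of characteristic functions to a characteristic function upgrades automatically to $L^1$ convergence, the accumulation point has exactly the form $(u,\chi_A)$ claimed.

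The main obstacle I anticipate is the last identification step: one must carefully use both the measure constraint $\int\phi=\alpha|\Omega|$ and the sharp equality $|\{u=0\}\cap\Omega|=\alpha|\Omega|$ from Theorem \ref{medida=}; without that sharpness (which relies on the strong minimum principle) the limit $\phi$ could a priori be any admissible potential supported in $\{u=0\}$, and one would not be able to conclude that $\phi$ is a characteristic function or that the limiting set is optimal for the hard problem.
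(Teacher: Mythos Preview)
Your proposal is correct and follows essentially the same route as the paper: upper bound by testing with hard extremals, extraction of weak/weak* limits, the identity $\int_\Omega |u|^p\phi\,dx=0$ to place $u$ in the admissible class, the sandwich argument to get $L=\Lambda_s(\alpha)$, and then Theorem \ref{medida=} plus the mass constraint to identify $\phi=\chi_{\{u=0\}}$, with strong $W^{s,p}$ convergence via uniform convexity. Your final remark that weak* convergence of $\chi_{D_\sigma}$ to $\chi_A$ upgrades to $L^1$ convergence is a correct bonus observation (the paper only claims weak* precompactness), and your identification of Theorem \ref{medida=} as the crucial ingredient is exactly right.
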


\begin{proof} Given $0<\alpha<1$, let $(u,A)$ be an optimal configuration for the constant $\Lambda_s(\alpha)$. Than is $u\in W^{s,p}_A(\Omega)$, $\|u\|_p=1$, $|A|=\alpha|\Omega|$ and $\Lambda_s(\alpha)= \frac12 [u]_{s,p}^p$.

Hence, we have that, for every $\sigma>0$,
$$
\Lambda_s(\sigma, \alpha) \le \lambda_s(\sigma, \chi_A)\le I_{s,\chi_A, \sigma}(u) = \frac12[u]_{s,p}^p = \Lambda_s(\alpha).
$$
Since trivialy $\Lambda_s(\sigma, \alpha)$ is nondecreasing with $\sigma$, it follows that there exists $\Lambda_*$ such that
\begin{equation}\label{Lambda*}
\Lambda_s(\sigma, \alpha)\uparrow \Lambda_*\le \Lambda_s(\alpha).
\end{equation}

Now let $\{\sigma_j\}_{j\in\N}\subset\R$ be a sequence such that $\sigma_j\uparrow \infty$ and take $(u_j,\phi_j)$ to be an optimal pair for $\Lambda_s(\sigma_j,\alpha)$. Then, for every $j\in\N$,
$$
\frac12 [u_j]_{s,p}\le I_{s,\phi_j,\sigma_j}(u_j)=\Lambda_s(\sigma_j,\alpha)\leq \Lambda_s(\alpha).
$$

Since $\|u_j\|_p=1$ for every $j\in\N$, using the reflexivity of the space $W^{s,p}(\Omega)$, the compactness of the embedding $W^{s,p}(\Omega)\subset L^p(\Omega)$ and the fact that $L^\ito(\Omega)$ is the dual of $L^1(\Omega)$, there exist a pair $(u,\phi)\in W^{s,p}(\Omega)\times \mathcal{B}$ and a subsequence (that we still call $\{(u_j,\phi_j)\}_{j\in\N}$) such that
\begin{align}
&\label{debilujsigma} u_j\rightharpoonup u \text{ weakly in } W^{s,p}(\Omega)\\
&\label{fuerteujLpsigma}u_j\to u \text{ strongly in }L^p(\Omega)\\
&\label{debilphijsigma} \phi_j \stackrel{\ast}{\rightharpoonup} \phi \text { $\ast-$weakly in }L^\ito(\Omega).
\end{align}

Using \eqref{fuerteujLpsigma} we get $1=\lim_{j\to \ito}\|u_j\|_{L^p(\Omega)}=\|u\|_{L^p(\Omega)}$, and by \eqref{debilphijsigma} we obtain that $\phi \in \mathcal{B}_\alpha$. Also using \eqref{fuerteujLpsigma} and \eqref{debilphijsigma} 
\begin{equation}\label{convobstaculo}
\int_\Omega |u_j|^p \phi_j\, dx \to \int_\Omega |u|^p \phi \, dx, \quad \text{as } j\to \ito.
\end{equation}
Taking into account \eqref{Lambda*}, we get
$$
0\leq \sigma_j\int_\Omega \phi_j |u_j|^p\,dx\leq I_{s,\phi_j,\sigma_j}(u_j)=\Lambda_s(\sigma_j,\alpha)\leq \Lambda_s(\alpha),
$$
from where we obtain
$$
0\leq \int_\Omega |u_j|^p \phi_j \, dx \leq \frac{\Lambda_s(\alpha)}{\sigma_j}
$$
and so
$$
\int_\Omega |u_j|^p \phi_j \, dx \to 0, \quad \text{as } j\to \ito.
$$
This and \eqref{convobstaculo} produce
\begin{equation}\label{uphiiguala0}
\int_\Omega |u|^p \phi \, dx = 0,
\end{equation}
from where we conclude that
\begin{equation}\label{phiuigual0}
|u|^p \phi = 0\quad \text{a.e. in } \Omega
\end{equation}

Now, \eqref{phiuigual0} implies that $u$ is admisible in the characterization of $\Lambda_s(\alpha)$. In fact, if we denote by $D=\{\phi>0\}\cap \Omega$, then
\begin{equation}\label{estimate.D}
|D| \ge \int_D \phi\, dx = \int_\Omega \phi\, dx = \alpha|\Omega|,
\end{equation}
and by \eqref{phiuigual0}, $u=0$ a.e. in $D$ as so $|\{u=0\}\cap\Omega|\ge |D|\ge \alpha |\Omega|$ as we wanted to show.

By \eqref{Lambda*}, \eqref{uphiiguala0} and the semicontinuity of the Gagliardo seminorm $[\,\cdot\,]_{s,p}$, 
\begin{align*}
\Lambda_s(\alpha)\ge \Lambda_* = &\lim_{j\to\ito}\Lambda_s(\sigma_j,\alpha)\\
=&\lim_{j\to\ito}I_{s,\phi,\sigma}(u_j)\\
\geq &\liminf_{j\to\ito}\frac12 [u_j]_{s,p}^p\\
\geq & \frac12 [u]_{s,p}^p\ge \Lambda_s(\alpha).
\end{align*}

Observe that the above computation shows that $u$ is an extremal for $\Lambda_s(\alpha)$. Moreover, since $\{u=0\}\cap\Omega\supset D$ and $\alpha|\Omega|\le |D|\le |\{u=0\}\cap\Omega|= \alpha |\Omega|$, where we have used Theorem \ref{medida=} in the last equality, we deduced that $D=\{u=0\}\cap\Omega$. Moreover, from \eqref{estimate.D} we easily deduce that $\phi = \chi_D$.

It remains to see that in fact $u_j\to u$ strongly in $W^{s,p}(\Omega)$. But from the above computations is easy to check that 
$$
\lim_{j\to\infty} [u_j]_{s,p} = [u]_{s,p},
$$
therefore, by \eqref{fuerteujLpsigma},
$$
\lim_{j\to\ito}\|u_j\|_{W^{s,p}(\Omega)} = \|u\|_{W^{s,p}(\Omega)}.
$$
Since $W^{s,p}(\Omega)$ is a uniformly convex Banach space, it follows that $u_j\to u$ strongly in $W^{s,p}(\Omega)$.

This finishes the proof of the theorem.
\end{proof}

\section{Asymptotic behavior for $\Lambda_s(\alpha)$ with $s\uparrow 1$} In this section we consider $\Omega\subset \R^n$ a bounded open set with Lipschitz boundary, $A\subset \Omega$ measurable with positive measure and define the functions $I_s, I\colon L^p(\Omega)\to \R$ by
\begin{equation}\label{Is}
I_s(u)=[u]_{s,p}^p=\begin{cases}
\displaystyle \iint_{\Omega\times \Omega} \frac{|u(x)-u(y)|^p}{|x-y|^{n+sp}}\,dx\,dy & \text{if } u\in W^{s,p}(\Omega)\cap E_\alpha\\
+\infty & \text{otherwise}
\end{cases}
\end{equation}
\begin{equation}\label{lIp}
I(u)= \begin{cases}
\displaystyle \int_\Omega|\nabla u|^p\,dx& \text{if } u\in W^{1,p}(\Omega)\cap E_\alpha\\
+\infty & \text{otherwise},
\end{cases}
\end{equation}
where  $E_\alpha = \{v\in L^p(\Omega)\colon |\{v=0\}\cap \Omega|\ge \alpha|\Omega|, \|v\|_p=1\}$. 

In this way is satisfied that 
\begin{equation}\label{expresionlambda.s}
\Lambda_{s}(\alpha)=\inf_{v\in L^p(\Omega)} \frac12 I_s(v)
\end{equation}
and we define the constant
\begin{equation}\label{expresionlambda}
\Lambda(\alpha)=\inf_{v\in L^p(\Omega)} \frac12 I(v).
\end{equation}
The idea is to analyze the asymptotic behavior of the constants $\Lambda_s$ by showing that, properly rescaled, they converge to the constant $\Lambda$ when $s\uparrow 1$. By \eqref{expresionlambda.s} and \eqref{expresionlambda} this is equivalent to analyze the asymptotic behavior of the infimums. To do it the right tool that we need to use is the concept of $\Gamma-$convergence (see \cite{Braides, DalMaso}). 
\begin{de}($\Gamma-$convergence)
Let $(X,d)$ be a metric space, and for every $j\in\N$, let $F_j,F\colon X\to \bar \R$. We say that the functions $F_j$ $\Gamma-$converges to the function $F$ if for every $x\in X$ the following conditions are valid.
\begin{enumerate}
\item($\liminf$ inequality) For every sequence $\{x_j\}_{j\in\N}\subset X$ such that $x_j\to x$, it holds
$$
F(x)\leq \liminf_{j\to \ito} F_j(x_j).
$$
\item ($\limsup$ inequality) There exists a sequence $\{y_j\}_{j\in\N}\subset X$ with $y_j\to x$ such that 
$$
\limsup_{j\to \ito}F_j(y_j)\leq F(x).
$$
\end{enumerate}
The function $F$ is called the $\Gamma-$limit of the sequence $\{F_j\}_{j\in\N}$, and this is denoted by 
$$
\glim_{j\to \ito}{F_j}=F.
$$
\end{de}
The following theorem, whose proof is elementary (see \cite{Braides}), will be most helpful in the sequel. We remark that this is not the most general result that can be obtained, nevertheless it will suffices for our purposes. For a comprehensive analysis of $\Gamma-$convergence, we refer to the book of G. Dal Maso \cite{DalMaso}.
\begin{thm}[Convergence of minima]\label{convergenciademinimos}
Let $(X,d)$ be a complete metric space, let $F_j, F\colon X\to \R$ be functions such that $\glim_{j\to \ito}{F_j}=F$ and suppose that there exist  $\{x_j\}_{j\in \N}\subset X$ such that
$$
\inf_X F_j=F_j(x_j) + o(1).
$$
Assume that the sequence $\{x_j\}_{j\in\N}$ is precompact in $X$. Then 
$$
\inf_XF_j\to \min_X F\quad  \text{if } j\to \ito.
$$
Moreover, if $x_0\in X$ is an accumulation point of the sequence $\{x_j\}_{j\in\N}$, then $x_0$ is a minimum for $F$.
\end{thm}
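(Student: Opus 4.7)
The plan is to establish the sandwich
$$
\inf_X F \;\le\; \liminf_{j\to\ito}\inf_X F_j \;\le\; \limsup_{j\to\ito}\inf_X F_j \;\le\; \inf_X F,
$$
which forces all four quantities to coincide, and then to read off both the convergence of the infima and the optimality of any accumulation point of $\{x_j\}_{j\in\N}$.

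First I would prove the upper bound $\limsup_j \inf_X F_j \le \inf_X F$ using only the $\limsup$ half of the $\Gamma$-convergence hypothesis; precompactness plays no role here. Given any $y\in X$, the definition of $\Gamma$-convergence supplies a recovery sequence $y_j\to y$ with $\limsup_j F_j(y_j)\le F(y)$. Since $\inf_X F_j\le F_j(y_j)$ for every $j$, passing to the $\limsup$ and then infimizing over $y\in X$ yields the claim.

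The reverse inequality is where precompactness and the $\liminf$ condition come in, and it requires a nested-subsequence argument that is the one genuinely delicate step. Set $L:=\liminf_j \inf_X F_j$ and pick a subsequence $\{j_k\}$ along which $\inf_X F_{j_k}\to L$. By precompactness of $\{x_j\}_{j\in\N}$, the sequence $\{x_{j_k}\}_k$ has a convergent further subsequence $x_{j_{k_l}}\to x_0$ in $X$. Applying the $\liminf$ inequality of $\Gamma$-convergence to $x_{j_{k_l}}\to x_0$ and using the near-minimality hypothesis $F_j(x_j)=\inf_X F_j+o(1)$, I obtain
$$
F(x_0)\;\le\;\liminf_{l\to\ito}F_{j_{k_l}}(x_{j_{k_l}})\;=\;\liminf_{l\to\ito}\inf_X F_{j_{k_l}}\;=\;L,
$$
so $\inf_X F\le F(x_0)\le L$. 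Combining with the first step, the sandwich collapses and proves $\inf_X F_j\to \inf_X F$, while simultaneously exhibiting $x_0$ as a point where the value $\inf_X F$ is actually attained, hence the infimum of $F$ is a minimum.

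For the final ``moreover'' claim, let $x_0$ now be an arbitrary accumulation point of $\{x_j\}_{j\in\N}$ and choose any subsequence $x_{j_k}\to x_0$. The $\liminf$ inequality, together with the convergence of infima just established, gives
$$
F(x_0)\;\le\;\liminf_{k\to\ito}F_{j_k}(x_{j_k})\;=\;\lim_{j\to\ito}\inf_X F_j\;=\;\min_X F,
$$
so $x_0$ is a minimizer of $F$. The only non-routine step is the nested-subsequence bookkeeping in the second paragraph; once that is handled, both conclusions follow immediately from the two halves of the $\Gamma$-convergence definition.
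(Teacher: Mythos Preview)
Your argument is correct and is exactly the standard proof of this fundamental theorem of $\Gamma$-convergence. Note, however, that the paper does not supply its own proof of this statement: it simply remarks that the proof is elementary and refers the reader to Braides' book. So there is nothing to compare your approach to; you have filled in what the paper deliberately omits.

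One minor technical remark: in the lower-bound step you apply the $\liminf$ inequality of $\Gamma$-convergence along a sub-subsequence $\{j_{k_l}\}$ rather than along the full index set $\N$. Strictly speaking, the definition as stated in the paper only asserts the $\liminf$ inequality for sequences indexed by all of $\N$. The passage to subsequences is legitimate because $\Gamma$-convergence is stable under taking subsequences (if $\glim_j F_j = F$ then $\glim_k F_{j_k} = F$ for every subsequence), which is an immediate and well-known consequence of the definition; you may want to mention this explicitly or simply cite the relevant proposition in Braides or Dal~Maso.
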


The idea is now to use Theorem \ref{convergenciademinimos} to obtain our convergence result. First we show the $\Gamma-$convergence of the functionals which is a simple derivation from \cite[Theorem 8]{Ponce}. 
\begin{thm}\label{Ponce}
Let $I_s, I\colon L^p(\Omega)\to \bar\R$ be the functions defined in \eqref{Is} and \eqref{lIp} respectively. Then
$$
\glim_{s\to 1}\, (1-s)I_s = K(n,p) I,
$$
where $K(n,p)=\pint_{S^{n-1}}|e_1\cdot \sigma|^p\,d \mathcal{H}^{n-1}(\sigma) = \frac{\Gamma(\frac{n}{2})\Gamma(\frac{p+1}{2})}{\sqrt{\pi}\Gamma(\frac{n+p}{2})}$.
\end{thm}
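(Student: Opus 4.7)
The plan is to reduce the statement to the unconstrained $\Gamma$-convergence theorem of Ponce cited above, which yields
$$
\glim_{s\uparrow 1}\,(1-s)[\,\cdot\,]_{s,p}^p = K(n,p)\int_\Omega |\nabla\cdot|^p\, dx
$$
on $L^p(\Omega)$ (with value $+\infty$ outside the respective Sobolev spaces). The only new ingredient is to accommodate the additional constraint $E_\alpha$ that appears in the definitions \eqref{Is} and \eqref{lIp}.

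First I would dispatch the $\limsup$ inequality, which is essentially free. Given $u\in L^p(\Omega)$: if $u\notin W^{1,p}(\Omega)\cap E_\alpha$ the right-hand side is $+\infty$ and there is nothing to check; if $u\in W^{1,p}(\Omega)\cap E_\alpha$, the constant sequence $v_s\equiv u$ is the natural recovery sequence, since $E_\alpha$ does not depend on $s$ and hence $v_s\in E_\alpha$ for every $s$. Ponce's pointwise Bourgain--Brezis--Mironescu-type limit then gives $(1-s)[u]_{s,p}^p \to K(n,p)\int_\Omega |\nabla u|^p\, dx$, with actual equality in the limsup.

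For the $\liminf$ inequality, fix $v_s\to u$ in $L^p(\Omega)$ and assume $L:=\liminf_{s\uparrow 1}(1-s)I_s(v_s)<+\infty$. Extracting a subsequence $s_j\uparrow 1$ along which the liminf is attained and remains bounded, finiteness forces $v_{s_j}\in E_\alpha$, i.e.\ $\|v_{s_j}\|_p=1$ and $|\{v_{s_j}=0\}\cap\Omega|\ge \alpha|\Omega|$. The $L^p$ convergence yields $\|u\|_p=1$. To transfer the zero-set constraint, pass to a further subsequence with $v_{s_j}\to u$ almost everywhere; whenever $v_{s_j}(x)=0$ for infinitely many $j$ we must have $u(x)=0$, so
$$
\{u=0\}\cap\Omega \;\supseteq\; \bigcap_{N\ge 1}\bigcup_{j\ge N}\bigl(\{v_{s_j}=0\}\cap\Omega\bigr),
$$
and the reverse Fatou lemma for the Lebesgue measure on the bounded set $\Omega$ gives
$$
|\{u=0\}\cap\Omega| \;\ge\; \limsup_{j\to\ito}|\{v_{s_j}=0\}\cap\Omega| \;\ge\; \alpha|\Omega|.
$$
Hence $u\in E_\alpha$. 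Applying Ponce's unconstrained $\liminf$ inequality (the uniform bound on $(1-s_j)[v_{s_j}]_{s_j,p}^p$ also forces $u\in W^{1,p}(\Omega)$) concludes $K(n,p)I(u)\le L$, as desired.

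The main and in fact only delicate point is the preservation of the zero-set constraint $|\{u=0\}\cap\Omega|\ge \alpha|\Omega|$ under $L^p$ convergence, handled above by passing to an almost-everywhere convergent subsequence and invoking the reverse Fatou lemma for sets; everything else is a direct reduction to Ponce's unconstrained $\Gamma$-convergence theorem.
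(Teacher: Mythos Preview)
Your proof is correct and follows essentially the same route as the paper. The paper packages the argument into a general lemma (Lemma~\ref{restriccion}) stating that a $\Gamma$-limit survives restriction to a closed set $Y$ provided one also has pointwise convergence on $Y$, and then applies it with $Y=E_\alpha$; you carry out the same two verifications inline---the constant recovery sequence together with the Bourgain--Brezis--Mironescu pointwise limit for the $\limsup$, and the closedness of $E_\alpha$ in $L^p(\Omega)$ (your reverse Fatou computation is exactly the ``semicontinuity of the measure'' step in the paper) for the $\liminf$.
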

Before starting with the proof let us make the following observation.  Let $J_s,J\colon L^p(\Omega)\to \bar{\R}$ be defined as 
\begin{equation}\label{Isbis}
J_s(u)=\begin{cases}
\displaystyle \iint_{\Omega\times \Omega} \frac{|u(x)-u(y)|^p}{|x-y|^{n+sp}}\,dx\,dy & \text{if } u\in W^{s,p}(\Omega)\\
+\infty & \text{otherwise}
\end{cases}
\end{equation}
\text{and}
\begin{equation}\label{lIpbis}
J(u)= \begin{cases}
\displaystyle \int_\Omega|\nabla u|^p\,dx& \text{if } u\in W^{1,p}(\Omega)\\
+\infty & \text{otherwise}
\end{cases}
\end{equation}
respectively. Then, in \cite[Theorem 8]{Ponce}, it is proved that $\glim_{s\to 1} (1-s)J_s = K(n,p)J$.

Observe that this result does not imply directly Theorem \ref{Ponce}, since our functions $I_s$ and $I$ are restrictions of $J_s$ and $J$ respectively and restrictions of $\Gamma-$converging functions do not necessarily $\Gamma-$converge. See \cite[Proposition 6.14]{DalMaso}.

However, in our case we can still recover Theorem \ref{Ponce} from \cite[Theorem 8]{Ponce} as a consequence of this general result.

\begin{lemp}\label{restriccion}
Let $F_j,F\colon X\to\bar{\R}$ be functions such that $\glim_{j\to \ito}{F_j}=F$. Let $Y\subset X$  be closed and define the restricted functions $\tilde F_j, \tilde F\colon X\to\bar\R$ by
$$
\tilde{F}_j(x)=
\begin{cases}
F_j(x) \text{ if } x\in Y,\\
+ \ito  \text{ otherwise,} 
\end{cases} \quad\text{and}\quad 
\tilde{F}(x)=
\begin{cases}
F(x) \text{ if } x\in Y,\\
+ \ito  \text{ otherwise.} 
\end{cases}
$$
If, moreover $F_j(y)\to F(y)$ for every $y\in Y$, then 
$$
\glim_{j\to \ito}\tilde{F}_j=\tilde{F}.
$$
\end{lemp}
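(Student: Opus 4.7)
The plan is to verify the two defining conditions of $\Gamma$-convergence directly, splitting according to whether the relevant point lies in $Y$ or not. Closedness of $Y$ will be the key to pushing the liminf inequality through subsequences that stay in $Y$, while the additional pointwise convergence hypothesis $F_j(y)\to F(y)$ on $Y$ will let us use a trivial constant recovery sequence for the limsup inequality.

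For the liminf inequality at an arbitrary $x\in X$, I would take any $x_j\to x$ and set $\ell=\liminf_{j\to\ito}\tilde F_j(x_j)$. The case $\ell=+\ito$ is automatic, so I assume $\ell<+\ito$ and extract a subsequence along which $\tilde F_{j_k}(x_{j_k})\to \ell$ and is finite for large $k$. By the definition of $\tilde F_{j_k}$ this forces $x_{j_k}\in Y$, and since $Y$ is closed and $x_{j_k}\to x$, I conclude that $x\in Y$, so $\tilde F(x)=F(x)$. The hypothesis $\glim_{j\to\ito}F_j=F$ applied to the subsequence then yields $F(x)\le \liminf_k F_{j_k}(x_{j_k})=\ell$, which is precisely $\tilde F(x)\le \ell$.

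For the limsup inequality at $x\in X$, I would split into two cases. If $x\notin Y$, then $\tilde F(x)=+\ito$ and the constant sequence $y_j=x$ trivially works. If $x\in Y$, then the hypothesis $F_j(y)\to F(y)$ for every $y\in Y$ lets me again take the constant sequence $y_j=x\in Y$, so that $\tilde F_j(y_j)=F_j(x)\to F(x)=\tilde F(x)$, giving the required recovery sequence.

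There is no substantial technical obstacle; the main conceptual point is identifying which hypothesis is used where. Without closedness of $Y$ the liminf step collapses, because a subsequence in $Y$ could have its limit outside $Y$, making $\tilde F(x)=+\ito$ strictly larger than the finite limit of $F_{j_k}(x_{j_k})$. Without the pointwise convergence assumption, the recovery sequence supplied by the unrestricted $\Gamma$-convergence of $F_j$ to $F$ could leave $Y$, so it would not be admissible for $\tilde F_j$; the pointwise hypothesis trivially sidesteps this by providing a recovery sequence already in $Y$.
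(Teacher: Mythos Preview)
Your proof is correct and follows essentially the same route as the paper: the liminf inequality uses closedness of $Y$ to conclude $x\in Y$ once a subsequence with finite values (hence lying in $Y$) has been extracted, and the limsup inequality uses the constant recovery sequence $y_j=x$ together with the pointwise convergence hypothesis on $Y$. Your treatment of the liminf step is in fact slightly more careful than the paper's, which simply writes ``we can assume that $x_j\in Y$ for every $j\in\N$'' without explicitly passing to a subsequence.
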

\begin{proof}
We first prove the $\liminf$ inequality. Indeed let $x\in X$ and $x_j\to x$ in $X$, we must prove that 
\begin{equation}\label{delliminf}
\tilde{F}(x)\leq \liminf_{j\to \ito } \tilde{F}_j(x_j).
\end{equation}
We can assume that $\liminf_{j\to\ito} \tilde{F}_j(x_j)<+\ito$, otherwise there is nothing to prove.

Then, we can assume that $x_j\in Y$ for every $j\in\N$, and so
$$
\tilde{F_j}(x_j) = F_j(x_j).
$$
Now using the $\Gamma-$convergence of the functions $F_j$ we get
$$
F(x)\le \liminf_{j\to \ito}F_j(x_j) = \liminf_{j\to \ito}\tilde{F}_j(x_j)
$$
Since $Y$ is closed, we conclude that $x\in Y$ and so $F(x)=\tilde{F}(x)$. In this way \eqref{delliminf} is proved.

Finally, we prove the $\limsup$ inequality i.e. for each $x\in X$ we must prove that exist a sequence $\{x_j\}_j\in X$ such that $x_j \to x$ in $X$ and 
\begin{equation}\label{delimisup}
\limsup_{j\to \ito}\tilde{F}_j(x_j)\leq \tilde{F}(x).
\end{equation}
We take the constant sequence $x_j=x$. If $x\notin Y$ then $\tilde{F}_j(x_j)= \tilde{F}(x)=\ito$ and the above inequality is trivial. We suppose that $x\in Y$, in this case $\tilde{F}_j(x_j)=F_j(x)$ and $\tilde{F}(x)=F(x)$, then by the pointwise convergence hypothesis we get that 
$$
\lim_{j\to \ito}F_j(x)=F(x),
$$     
in particular \eqref{delimisup} is valid.
\end{proof}

With this lemma, the proof of Theorem \ref{Ponce} is trivial.
\begin{proof}[Proof of Theorem \ref{Ponce}]
Let $\{s_j\}_j$ be an arbitrary sequence in $(0,1)$ with $s_j\to 1$ when $j\to \ito$. We will apply Lemma \ref{restriccion} with
$$
F_j=(1-s_j)J_{s_j},\quad F=K(n,p) J,\quad X=L^p(\Omega) \quad \text{and}\quad Y=L^p(\Omega)\cap E_\alpha.
$$
Then,
$$
\tilde{F}_j=(1-s_j)I_{s_j},\quad \tilde{F}=K(n,p) I.
$$
In order to apply Lemma \ref{restriccion} we need prove that
\begin{equation}\label{puntual1}
F_j(v)\to F(v),\quad \forall v\in L^p(\Omega)\cap E_\alpha
\end{equation}
and that the space $Y=L^p(\Omega)\cap E_\alpha$ is closed in $L^p(\Omega)$.

Now \eqref{puntual1} is proved in \cite[Corollary 2]{Bourgain-Brezis-Mironescu}.  On the other hand let $\{u_j\}_{j\in \N}\in Y$ be such that $u_j\to u$ in $L^p(\Omega)$ as $j\to\ito$. This implies that
$$
|\{u=0\}|\ge \limsup_{j\to\ito} |\{u_j=0\}| \ge \alpha |\Omega|,
$$
by the semicontinuity of the measure.

The proof is now complete.
\end{proof}

In this way the convergence of the eigenvalues is a direct consequence of Theorems \ref{convergenciademinimos} and \ref{Ponce}. 
\begin{thm}\label{mainasintotico}
Let $0<\alpha<1$ be fixed and let the constants $\Lambda_s(\alpha)$ and $\Lambda(\alpha)$ be defined by \eqref{expresionlambda.s} and \eqref{expresionlambda} respectively. Then
$$
(1-s)\Lambda_s(\alpha)\to K(n,p)\Lambda(\alpha)\quad \text{as}\quad s\uparrow 1.
$$
Moreover, if $u_s\in W^{s,p}(\Omega)\cap E_\alpha$ is an extremal for $\Lambda_s(\alpha)$, then for every sequence $s_j\to 1$, $\{u_{s_j}\}_{j\in\N}$ is precompact in $L^p(\Omega)$ and every accumulation point of the sequence is an extremal for $\Lambda(\alpha)$.
\end{thm}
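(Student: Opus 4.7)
The plan is to deduce Theorem \ref{mainasintotico} from the $\Gamma$-convergence result in Theorem \ref{Ponce} combined with the abstract convergence-of-minima principle in Theorem \ref{convergenciademinimos}. Observe that $(1-s)\Lambda_s(\alpha) = \inf_{v \in L^p(\Omega)} \frac12(1-s) I_s(v)$ and $K(n,p)\Lambda(\alpha) = \inf_{v \in L^p(\Omega)} \frac12 K(n,p) I(v)$, so once precompactness of the minimizers is verified, the convergence of the minima, together with the claim about accumulation points, will come for free.

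Concretely, I would fix an arbitrary sequence $s_j \uparrow 1$ and set $F_j := \frac12 (1-s_j) I_{s_j}$ and $F := \frac12 K(n,p) I$. Theorem \ref{Ponce} yields $\glim_{j \to \ito} F_j = F$ in $L^p(\Omega)$. Let $u_j := u_{s_j}$ be a normalized extremal for $\Lambda_{s_j}(\alpha)$. The first step is to produce a uniform bound on $F_j(u_j) = (1-s_j)\Lambda_{s_j}(\alpha)$. Pick any $v \in W^{1,p}(\Omega) \cap E_\alpha$; since $v$ belongs to $W^{s_j,p}(\Omega) \cap E_\alpha$ for every $j$, the minimality of $u_j$ gives $F_j(u_j) \le F_j(v) = \frac12 (1-s_j)[v]_{s_j,p}^p$, and by \cite[Corollary 2]{Bourgain-Brezis-Mironescu} the right-hand side converges to $\frac12 K(n,p) \int_\Omega |\nabla v|^p\, dx$. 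Taking the infimum over admissible $v$ yields $\limsup_{j \to \ito} F_j(u_j) \le K(n,p) \Lambda(\alpha) < \ito$.

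The second step, which is the crux of the argument, is the precompactness of $\{u_j\}_{j \in \N}$ in $L^p(\Omega)$. Combining $\|u_j\|_p = 1$ with the just-established uniform bound on $(1-s_j)[u_j]_{s_j,p}^p$, the compactness theorem of Bourgain--Brezis--Mironescu (see also \cite{Ponce}) applies and delivers a subsequence converging strongly in $L^p(\Omega)$. Once this is in hand, Theorem \ref{convergenciademinimos} concludes the proof: $\min F_j \to \min F$, that is $(1-s_j)\Lambda_{s_j}(\alpha) \to K(n,p) \Lambda(\alpha)$, and every accumulation point of $\{u_j\}$ is a minimizer of $\frac12 K(n,p) I$ on $L^p(\Omega)$, hence an extremal for $\Lambda(\alpha)$. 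Since $s_j \uparrow 1$ was arbitrary, the full convergence $(1-s)\Lambda_s(\alpha) \to K(n,p)\Lambda(\alpha)$ follows. The main technical obstacle is precisely this $L^p$-precompactness, which is not available from the embeddings of the individual $W^{s_j,p}(\Omega)$ spaces (whose constants degenerate as $s_j \to 1$) and which forces us to appeal to the Bourgain--Brezis--Mironescu compactness result adapted to the scaling by $(1-s_j)$.
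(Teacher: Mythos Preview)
Your proposal is correct and follows essentially the same route as the paper: apply Theorem \ref{Ponce} to get $\Gamma$-convergence, verify $L^p$-precompactness of the minimizers via the Bourgain--Brezis--Mironescu compactness result (which the paper cites as \cite[Corollary 7]{Bourgain-Brezis-Mironescu}), and then invoke Theorem \ref{convergenciademinimos}. Your write-up is actually more detailed than the paper's, since you spell out the uniform bound $\limsup_j (1-s_j)\Lambda_{s_j}(\alpha) \le K(n,p)\Lambda(\alpha)$ needed to feed into the BBM compactness, whereas the paper simply asserts that precompactness follows from that reference.
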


\begin{proof}
Theorem \ref{Ponce}, gives the $\Gamma-$convergece of the functionals $I_s$ to $I$. In order to apply Theorem \ref{convergenciademinimos} it remains to check that a sequence of minimizers for $I_s$ is precompact in $L^p(\Omega)$. But this is proved in \cite[Corollary 7]{Bourgain-Brezis-Mironescu}.
\end{proof}

\subsection{Convergence of optimal sets} An interesting consequence of the above theorem is that the sequence of extremals $u_s$ associated to the constants $\Lambda_s(\alpha)$ verify that $(1-s)[u_s]_{s,p}^p$ is uniformly bounded. Then by the compactness result \cite[Corollary 7]{Bourgain-Brezis-Mironescu}, there exists a function $u\in W^{1,p}(\Omega)$ and a sequence $u_{s_j}$ such that $u_{s_j} \to u$ strongly in $L^p(\Omega)$. This fact allows us conclude the convergence of the optimal sets $A_{s_j} = \{u_{s_j}=0\}$ to an optimal set $A$ associated to the constant $\Lambda(\alpha)$.

We first need the follow result that was obtained in \cite{Bonder-Groisman-Rossi}.
\begin{lemp}[\cite{Bonder-Groisman-Rossi}, Lemma 3.1]\label{convsimetrica}
Let $(X,\Sigma,\nu)$ be a measure space of finite measure and let $\{f_n\}_{n\in \N},f$ be $\nu-$measureble nonnegative functions such that $f_n\to f$ a.e.

Let $\mu$ a nonnegative measure absolutely continuous with respect to $\nu$. Then if $\lim_{n\to \ito}\mu(\{f_n=0\})=\mu(\{f=0\})$, it follows that
$$
\lim_{n\to \ito}\mu(\{f_n=0\}\triangle \{f=0\})=0.
$$  
\end{lemp}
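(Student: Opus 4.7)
The plan is to decompose the symmetric difference and handle the two halves separately, using the a.e. convergence to control one piece by dominated convergence and then using the hypothesis $\mu(\{f_n=0\})\to\mu(\{f=0\})$ to get the other piece for free. Write $A_n=\{f_n=0\}$ and $A=\{f=0\}$, so
$$
\mu(A_n\triangle A)=\mu(A_n\setminus A)+\mu(A\setminus A_n),
$$
and the task reduces to showing each summand tends to $0$.

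First I would handle $\mu(A_n\setminus A)=\mu(A_n\cap\{f>0\})$. On $\{f>0\}$, pointwise a.e.\ with respect to $\nu$ we have $f_n(x)\to f(x)>0$, so $f_n(x)>0$ for all sufficiently large $n$; hence $\chi_{A_n\setminus A}\to 0$ a.e.\ $[\nu]$, and since $\mu\ll\nu$, also a.e.\ $[\mu]$. Because $\chi_{A_n\setminus A}\le 1$ and $\mu$ is finite on the ambient space (which in the application is Lebesgue measure on the bounded set $\Omega$; in any case $\mu(\{f>0\})$ may be taken finite, otherwise the statement would already be vacuous in view of $\mu(A)<\infty$), the dominated convergence theorem yields
$$
\mu(A_n\setminus A)\to 0.
$$

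Second, for $\mu(A\setminus A_n)$, I would use the additive decomposition $\mu(A_n)=\mu(A_n\cap A)+\mu(A_n\setminus A)$ together with the hypothesis $\mu(A_n)\to\mu(A)$ and the already-established $\mu(A_n\setminus A)\to 0$, to conclude $\mu(A_n\cap A)\to\mu(A)$. Since $\mu(A\setminus A_n)=\mu(A)-\mu(A\cap A_n)$, this forces $\mu(A\setminus A_n)\to 0$ as well. Adding the two contributions gives the claimed conclusion.

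The only delicate point is the dominated convergence step, which requires a mild integrability assumption (finiteness of $\mu$ on $\{f>0\}$, or equivalently a dominating function in $L^1(\mu)$); this is what forces the hypothesis that $\mu$ be absolutely continuous with respect to a finite measure $\nu$ to be used non-trivially. Everything else is a straightforward set-theoretic bookkeeping argument.
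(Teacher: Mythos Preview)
The paper does not actually prove this lemma; it is stated with a citation to \cite{Bonder-Groisman-Rossi}, Lemma 3.1, and no proof is given. Your argument is correct and is the standard one: use a.e.\ convergence plus dominated convergence to kill $\mu(A_n\setminus A)$, then recover $\mu(A\setminus A_n)$ from the additive identity $\mu(A_n)=\mu(A_n\cap A)+\mu(A_n\setminus A)$ together with the hypothesis $\mu(A_n)\to\mu(A)$. Your caveat about the DCT step requiring $\mu(\{f>0\})<\infty$ is well taken; in the paper's only application (Theorem~\ref{conventanas}) one has $\mu=\nu$ equal to Lebesgue measure on a bounded domain, so the issue does not arise.
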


Another property that we need is the fact that any extremal $u\in W^{1,p}(\Omega)\cap E_\alpha$ for $\Lambda(\alpha)$ has the property that $|\{u=0\}|=\alpha|\Omega|$. This follows in a completely analogous way as in \cite{Bonder-Rossi-Wolanski1}. We state the result for future reference.
\begin{lemp}\label{extremal1}
Let $u\in W^{1,p}(\Omega)\cap E_\alpha$ be an extremal for $\Lambda(\alpha)$. Then 
$$
|\{u=0\}|=\alpha|\Omega|.
$$
\end{lemp}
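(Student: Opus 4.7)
The plan is to replicate, in the local setting, the argument that established Theorem \ref{medida=} for the nonlocal case. First I would prove a local counterpart of Lemma \ref{caractLambda}, namely that
$$
\Lambda(\alpha)=\inf\{\lambda(A)\colon A\subset\Omega \text{ measurable},\ |A|=\alpha|\Omega|\},
$$
where
$$
\lambda(A)=\inf\left\{\tfrac12\|\nabla v\|_p^p/\|v\|_p^p\colon v\in W^{1,p}(\Omega),\ v=0 \text{ a.e.\ in }A\right\}.
$$
This is verbatim the proof of Lemma \ref{caractLambda} with the Gagliardo seminorm replaced by the gradient norm: on the one hand, testing the infimum defining $\Lambda(\alpha)$ with an extremal for $\lambda(A)$ gives one inequality; on the other, given a normalized minimizing sequence $\{v_n\}$ for $\Lambda(\alpha)$, picking $A_n\subset\{v_n=0\}\cap\Omega$ with $|A_n|=\alpha|\Omega|$ gives the reverse inequality.

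Next, I would argue by contradiction. Suppose $|\{u=0\}\cap\Omega|>\alpha|\Omega|$. By inner regularity of Lebesgue measure I can choose a closed set $A$ with
$$
\alpha|\Omega|<|A|<|\{u=0\}\cap\Omega|,\qquad A\subset\{u=0\}\cap\Omega.
$$
Since $u$ vanishes a.e.\ on $A$, $u$ is admissible for $\lambda(A)$ and by the characterization above
$$
\Lambda(\alpha)=\tfrac12\|\nabla u\|_p^p\le \lambda(A)\le \tfrac12\|\nabla u\|_p^p=\Lambda(\alpha),
$$
so $u$ is in fact an extremal for $\lambda(A)$. Consequently, $u$ (which may be assumed nonnegative by replacing $u$ with $|u|$, using the standard fact that $\|\nabla|u|\|_p=\|\nabla u\|_p$) is a weak solution of the mixed boundary value problem
$$
-\Delta_p u=\lambda(A)\,u^{p-1}\ \text{in }\Omega\setminus A,\qquad u=0\ \text{on }A.
$$

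Finally, I would apply the classical strong minimum principle for the $p$-Laplacian (Vázquez's principle) to conclude $u>0$ a.e.\ in $\Omega\setminus A$. This contradicts the fact that
$$
|(\{u=0\}\cap\Omega)\setminus A|=|\{u=0\}\cap\Omega|-|A|>0,
$$
proving the claim. The only delicate step is justifying that Vázquez's strong minimum principle applies in the mixed setting: since $A$ is closed and $u$ is a nonnegative weak solution of an elliptic equation with nonnegative right hand side on the open set $\Omega\setminus A$, the principle applies on each connected component of $\Omega\setminus A$ separately, yielding the desired positivity. This is exactly the local analogue of the chain Lemma \ref{logaritmic}--Theorem \ref{strongminimo} used in the proof of Theorem \ref{medida=}, and is the reason the proof closes.
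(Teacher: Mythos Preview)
Your outline is the natural local transcription of the paper's nonlocal argument (Theorem \ref{medida=}); the paper itself does not reprove Lemma \ref{extremal1} but simply refers to \cite{Bonder-Rossi-Wolanski1}, so a direct comparison of proofs is not possible here. The characterization step and the reduction to an eigenvalue equation on $\Omega\setminus A$ are fine and go through exactly as in the fractional case.

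There is, however, a genuine gap at the very end. In the nonlocal setting, the strong minimum principle of Theorem \ref{strongminimo} gives $u>0$ a.e.\ on all of $\Omega\setminus A$ in one stroke, because the operator $(-\Delta_{p,\Omega})^s$ couples every pair of points in $\Omega$; connectedness of $\Omega\setminus A$ plays no role. V\'azquez's principle is purely local: on each connected component $C$ of $\Omega\setminus A$ it yields only the dichotomy ``$u>0$ a.e.\ on $C$'' or ``$u\equiv 0$ on $C$'', and nothing in your construction excludes the second alternative. Concretely, if your closed set $A$ sits strictly inside $\{u=0\}$ in such a way that some component $C$ of $\Omega\setminus A$ is still entirely contained in $\{u=0\}$ (which inner regularity alone does not prevent), then $u\equiv 0$ on $C$ is perfectly consistent with the equation, and the set $(\{u=0\}\cap\Omega)\setminus A$ having positive measure produces no contradiction. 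Your phrase ``yielding the desired positivity'' is precisely where this is assumed without justification.

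To repair the argument you need an extra ingredient: either choose $A$ more carefully so that every component of $\Omega\setminus A$ meets $\{u\neq 0\}$ in positive measure (this can be arranged, for instance by enlarging $A$ to absorb all components of $\Omega\setminus A$ lying inside $\{u=0\}$, but then you must separately rule out the degenerate case where this absorption exhausts all of $\{u=0\}$), or give a direct perturbation argument showing that an extremal cannot vanish identically on a nontrivial open subset of $\Omega\setminus A$. The paper \cite{Bonder-Rossi-Wolanski1} handles this; you should consult it to see which route is taken.
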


\begin{proof}
See the proof of Theorem 1.2 in \cite{Bonder-Rossi-Wolanski1}.
\end{proof}

Now using the above lemmas we can prove the convergence of optimal sets.
\begin{thm}\label{conventanas}
Let $A_s\subset \Omega$ be an optimal set for $\Lambda_s(\alpha)$. Then, there exists a sequence $s_j\to 1$ and a set $A\subset \Omega$ such that
$$
\chi_{A_{s_j}}\to \chi_{A}\text{ strongly in } L^1(\Omega).
$$
Moreover, the set $A$ is optimal for $\Lambda(\alpha)$ in the sense that there exists an extremal $u\in W^{1,p}(\Omega)\cap E_\alpha$ such that $\{u=0\}=A$.
\end{thm}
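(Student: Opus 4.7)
The plan is to lift the $L^p$-convergence of extremals provided by Theorem \ref{mainasintotico} to an $L^1$-convergence of the characteristic functions of their zero sets via Lemma \ref{convsimetrica}.

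First, I would attach to each optimal set $A_s$ an associated extremal $u_s \in W^{s,p}(\Omega) \cap E_\alpha$ for $\Lambda_s(\alpha)$. Theorem \ref{medida=} forces $|\{u_s=0\}\cap\Omega|=\alpha|\Omega|$, while $A_s \subset \{u_s=0\}\cap\Omega$ already has measure $\alpha|\Omega|$, so the two sets coincide up to a Lebesgue-null set. I may therefore work with $A_s = \{u_s=0\}\cap\Omega$.

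Next, I would feed the sequence $\{u_s\}$ into Theorem \ref{mainasintotico} to extract a subsequence $s_j \uparrow 1$ and a function $u \in W^{1,p}(\Omega)\cap E_\alpha$ such that $u_{s_j}\to u$ strongly in $L^p(\Omega)$ and $u$ is an extremal for $\Lambda(\alpha)$. After a further subsequence extraction I can assume $u_{s_j}\to u$ almost everywhere in $\Omega$. Set $A:=\{u=0\}\cap\Omega$. By Lemma \ref{extremal1}, $|A|=\alpha|\Omega|$, so that
$$
|\{u_{s_j}=0\}\cap\Omega| = \alpha|\Omega| = |\{u=0\}\cap\Omega|
$$
for every $j$. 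Apply Lemma \ref{convsimetrica} with $\nu=\mu$ Lebesgue measure on $\Omega$, $f_n=|u_{s_j}|$ and $f=|u|$. The pointwise a.e.\ convergence holds by construction and the equality of measures of the zero sets is precisely the hypothesis $\mu(\{f_n=0\})\to\mu(\{f=0\})$. The lemma therefore yields
$$
|(\{u_{s_j}=0\}\triangle\{u=0\})\cap\Omega|\to 0,
$$
which is exactly $\chi_{A_{s_j}}\to\chi_A$ strongly in $L^1(\Omega)$. Optimality of $A$ is automatic since, by construction, $A=\{u=0\}\cap\Omega$ with $u$ an extremal for $\Lambda(\alpha)$.

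No genuine obstacle is expected: the argument is essentially bookkeeping once Theorem \ref{mainasintotico}, Theorem \ref{medida=}, Lemma \ref{extremal1} and Lemma \ref{convsimetrica} are available. The most delicate point is the verification of the hypothesis $\mu(\{f_n=0\})\to\mu(\{f=0\})$ in Lemma \ref{convsimetrica}; this is what would fail without the \emph{equality} $|\{u_s=0\}\cap\Omega|=\alpha|\Omega|$ from Theorem \ref{medida=} (a mere inequality would only yield weak convergence of the characteristic functions, not strong $L^1$-convergence), so the strong minimum principle developed earlier is what ultimately makes this step go through.
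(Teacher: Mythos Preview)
Your proposal is correct and follows essentially the same route as the paper's proof: extract a convergent subsequence of extremals via Theorem \ref{mainasintotico}, use Theorem \ref{medida=} and Lemma \ref{extremal1} to match the measures of the zero sets, and then invoke Lemma \ref{convsimetrica} to upgrade to $L^1$-convergence of the characteristic functions. You are in fact slightly more careful than the paper in two places---explicitly identifying $A_s$ with $\{u_s=0\}\cap\Omega$ and explicitly passing to a further a.e.\ convergent subsequence before applying Lemma \ref{convsimetrica}---both of which are genuinely needed.
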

\begin{proof}
Let $u_s\in W^{s,p}(\Omega)\cap E_\alpha$ be an extremal for $\Lambda_s(\alpha)$. Then, by Theorem \ref{medida=} we have
$$
|\{u_s=0\}\cap \Omega|=\alpha|\Omega|.
$$
and by Theorem \ref{mainasintotico}, there exists a sequence $s_j\to 1$ and a function $u\in W^{1,p}(\Omega)\cap E_\alpha$ such that $u_{s_j}\to u$ strongly in $L^p(\Omega)$ and $u$ is an extremal for $\Lambda(\alpha)$.

By Lemma \ref{convsimetrica} and Lemma \ref{extremal1} we obtain
\begin{equation}\label{triangle}
\lim_{j\to\ito} |\{u_{s_j}=0\}\triangle \{u=0\}| = 0.
\end{equation}
So if we called $A_s := \{u_s=0\}$ and $A=\{u=0\}$ from \eqref{triangle} we obtain
$$
\chi_{A_{s_j}}\to \chi_A\quad \text{strongly in } L^1(\Omega).
$$
The proof is complete.
\end{proof}

\section*{Acknowledgements}

This paper was partially supported by Universidad de Buenos Aires under grant UBACyT 20020130100283BA, CONICET under grant PIP 11220150100032CO and by ANPCyT under grant PICT 2012-0153. J. Fern\'andez Bonder is a member of CONICET.

We want to thank L. Del Pezzo for his help with Section 2. In particular, the proof of Lemma \ref{plap.bien} was communicated to us by him.

We also want to thank the anonymous referee for the careful reading of the manuscript and several suggestions that help us to improve the presentation of the paper.


\bibliographystyle{plain}
\bibliography{bib}

\end{document}